\newenvironment{enumeratea} {\begin{enumerate}[\upshape (a)]} {\end{enumerate}}
\newenvironment{enumeratei} {\begin{enumerate}[\upshape (i)]} {\end{enumerate}}
\newenvironment{enumerate1} {\begin{enumerate}[\upshape (1)]} {\end{enumerate}}
\newtheorem{lem}{Lemma}[section]
\newtheorem{thm}[lem]{Theorem}
\newtheorem{conj}[lem]{Conjecture}
\newtheorem{prop}[lem]{Proposition}
\newtheorem{cor}[lem]{Corollary}
\theoremstyle{definition}
\newtheorem{defn}[lem]{Definition}
\newtheorem{example}[lem]{Example}
\theoremstyle{remark}
\newtheorem{remark}[lem]{Remark}
\numberwithin{equation}{section}
\newcommand{\epf}{\qed \vspace{+10pt}}
   \newcommand\cF{\mathcal{F}} \newcommand\cG{\mathcal{G}}\newcommand\cI{\mathcal{I}}\newcommand\cJ{\mathcal{J}}\newcommand\cK{\mathcal{K}}\renewcommand\cL{\mathcal{L}}\newcommand\cO{\mathcal{O}}\newcommand\cU{\mathcal{U}}\newcommand\cV{\mathcal{V}}\newcommand\cW{\mathcal{W}}\newcommand\cX{\mathcal{X}}\newcommand\cY{\mathcal{Y}}\newcommand\cZ{\mathcal{Z}}
\renewcommand\AA{\mathbb{A}}\newcommand\GG{\mathbb{G}}\newcommand\PP{\mathbb{P}}
\newcommand\ZZ{\mathbb{Z}}
   \newcommand\fF{\mathfrak{F}} \newcommand\fG{\mathfrak{G}}\newcommand\fI{\mathfrak{I}}\newcommand\fJ{\mathfrak{J}}\newcommand\fR{\mathfrak{R}}\newcommand\fU{\mathfrak{U}}\newcommand\fW{\mathfrak{W}}\newcommand\fX{\mathfrak{X}}\newcommand\fY{\mathfrak{Y}}\newcommand\fZ{\mathfrak{Z}}
\newcommand\fc{\mathfrak{c}} \newcommand\fs{\mathfrak{s}}   \newcommand\ft{\mathfrak{t}}  \newcommand\fe{\mathfrak{e}} \newcommand\fid{\mathfrak{i}} \newcommand\fq{\mathfrak{q}}
\newcommand\id{\mathrm{id}}
\newcommand\arr{\ifinner\to\else\longrightarrow\fi}
\newcommand\mapsonto{\twoheadrightarrow}
\newcommand\hookarr{\hookrightarrow}
\newcommand\im{\operatorname{im}}
\newcommand{\QCoh}{\operatorname{QCoh}}
\renewcommand{\setminus}{\smallsetminus}
\newcommand{\red}{_{\mathrm{red}}}
\newcommand{\h}{\widehat}
\renewcommand{\ss}{\operatorname{ss}}
\newcommand{\s}{\operatorname{s}}
\newcommand{\rrarrows}{\rightrightarrows}
\newcommand{\Eq}{\operatorname{Eq}}
\newcommand{\sSpec}{\operatorname{\mathcal{S}{\it pec}}}
\newcommand{\Aut}{\operatorname{Aut}}
\newcommand{\oh}{\cO}
\newcommand{\Spec}{\operatorname{Spec}}
\newcommand{\tensor} {\otimes}
\renewcommand{\tilde}{\widetilde}
\newcommand{\Spf}{\operatorname{Spf}}
\newcommand{\iso}{\stackrel{\sim}{\arr}}
\newcommand{\ilim}{{\displaystyle \lim_{\longleftarrow}}\,}
\newcommand{\dlim}{{\displaystyle \lim_{\longrightarrow}}\,}
\newcommand{\htensor}{\h{\tensor}}
\renewcommand{\hat}{\widehat}
\title{Local properties of good moduli spaces}
\author{\textsc{Jarod Alper}}
\begin{document}
\begin{abstract}
We study the local properties of Artin stacks and their good moduli spaces, if they exist.  We show that near closed points with linearly reductive stabilizer, Artin stacks formally locally admit good moduli spaces.  In particular, the geometric invariant theory is developed for actions of linearly reductive group schemes on formal affine schemes.  We also give conditions for when the existence of good moduli spaces can be deduced from the existence of \'etale charts admitting good moduli spaces.
\end{abstract}

\maketitle
\footnote[0]
	{2000 \textit{Mathematics Subject Classification}.
	Primary 14D23; Secondary 13A50.}
\footnote[0]{\textit{Keywords and phrases. Artin stacks, good moduli spaces, invariant theory}}

\section{Introduction}

We address the question of whether good moduli spaces for an Artin stack can be constructed ``locally."  The main results of this paper are:  (1) good moduli spaces exist formally locally around points with linearly reductive stabilizer and (2) sufficient conditions are given for the Zariski-local existence of good moduli spaces given the \'etale-local existence of good moduli spaces.  We envision that these results may be of use to construct moduli schemes of Artin stacks without the classical use of geometric invariant theory and semi-stability computations.

The notion of a \emph{good moduli space} was introduced in \cite{alper_good_arxiv} to associate a scheme or algebraic space to Artin stacks with nice geometric properties reminiscent of Mumford's good GIT quotients.  While good moduli spaces cannot be expected to distinguish between all points of the stack, they do parameterize points up to orbit closure equivalence.  See Section \ref{notation_sec} for the precise definition of a good moduli space and for a summary of its properties.

While the paper \cite{alper_good_arxiv} systematically develops the properties of good moduli spaces, the existence was only proved in certain cases.  For instance, if $\cX = [\Spec A / G]$ is a quotient stack of an affine by a linearly reductive group, then $\cX \arr \Spec A^G$ is a good moduli space (\cite[Theorem 13.2]{alper_good_arxiv}).  Additionally, for any quasi-compact Artin stack $\cX$ with a line bundle $\cL$, there is a naturally defined semi-stable locus $\cX^{\ss}_{\cL}$ and stable locus $\cX^{\s}_{\cL}$ such that $\phi: \cX^{\ss}_{\cL} \arr Y$ is a good moduli space where $Y$ is a quasi-projective scheme, and there is an open subscheme $V \subseteq Y$ such that $\phi^{-1}(V) = \cX^{\s}_{\cL}$ and $\phi|_{\cX^{\s}_{\cL}}$ is a coarse moduli space (\cite [Theorem 11.14]{alper_good_arxiv}).

One might dream that there is some topological criterion guaranteeing existence of a good moduli space in the same spirit of the finite inertia hypothesis guaranteeing the existence of a coarse moduli space. One might pursue the following approach:
\begin{enumerate1}
\item Show that good moduli spaces exist locally around closed points.
\item Show that these patches glue to form a global good moduli space.
\end{enumerate1}

We are tempted to conjecture that if $x \in |\cX|$ is a closed point of an Artin stack with linearly reductive stabilizer, then there exists an open substack $\cU \subseteq \cX$ containing $x$ such that $\cU$ admits a good moduli space.  However, Example \ref{example} shows that this is too much to hope for, and it is unclear what the additional requirement should be to guarantee local existence of a good moduli space.

While we cannot establish the existence of good moduli spaces Zariski-locally or \'etale-locally, we show that formally locally good moduli spaces exist around closed points $\xi \in |\cX|$ with linearly reductive stabilizer.  Denote by $\cX_i$ the nilpotent thickenings of the induced closed immersion $\cG_{\xi} \hookarr \cX$. Section \ref{formal_good_sec} is devoted to making precise the following statement:  if $\hat{\cX}$ is the ``completion of $\cX$ at $\xi$'', then $\hat{\cX} \arr \Spf \ilim \Gamma(\cX_i, \oh_{\cX_i})$ is a good moduli space. 

We prove in Section \ref{formal_good_sec} that if there exists a good moduli space, then this formally local description is correct.  Precisely, we prove the following:
\begin{thm} \label{theorem-formal-local}
Suppose $\cX$ is an Artin stack of finite type over $\Spec k$ where $k$ is a field and $\phi: \cX \arr Y$ is a good moduli space.  Let $x: \Spec k \arr \cX$ be a closed point with image $y = \phi(x)$.  Let $\cX_i$ be the nilpotent thickenings of the induced closed immersion $BG_x \hookarr \cX$.  There are isomorphisms $\cX_i \cong [\Spec A_i / G_x]$ which induces an action of $G_x$ on $\Spf A$ where $A=\ilim A_i$.  There are isomorphisms of topological rings
$$\xymatrix{
\hat{\oh}_{Y,y} \ar[r] \ar[rd]		&  \ilim (A_i^{G_x}) \ar[d]\\
							& A^{G_x} .
}$$
\end{thm}

In particular, the formal local ring $\hat{\oh}_{Y,y}$ at a closed point $y \in Y$ of a good moduli space is simply the invariants of the induced action of $G_x$ on the miniversal deformation space $\Spf A$ of $x \in |\cX|$.

We also establish that the theorem on formal functions holds for good moduli spaces; see Theorem \ref{fns_thm}.  This provides further evidence that good moduli spaces behave very similar to proper morphisms: good moduli spaces are universally closed and finite type, preserve coherence under push forward and satisfy the formal functions theorem but are not necessarily separated.

In Section \ref{formal_git_section}, we develop the geometric invariant theory for quotients of formal affine schemes by linearly reductive group schemes. 

A sufficiently powerful structure theorem for Artin stacks giving \'etale charts by quotient stacks could imply existence of good moduli spaces Zariski-locally.  We recall the conjecture from \cite{alper_quotient}:

\begin{conj}
If $\cX$ is an Artin stack finite type over $\Spec k$ and $x \in \cX(k)$ has linearly reductive stabilizer, then there is an algebraic space $X$ over $\Spec k$ with an action of the stabilizer $G_x$, a point $\tilde x \in X$, and an \'etale morphism $[X/G_x] \arr \cX$ inducing an isomorphism $G_{\tilde x} \iso G_x$.
\end{conj}

If the conjecture is true for $x \in \cX(k)$ with the additional requirement that $X$ is affine, then there is an induced diagram
$$\xymatrix{
\cW = [X/G_x] \ar[d]^{\varphi} \ar[r]^{\qquad f}	& \cX \\
W	& ,
}$$
where $\varphi$ is a good moduli space, $f$ is an \'etale, representable morphism, and there is a point $w \in \cW(k)$ with $f(w) = x$ inducing an isomorphism $\Aut_{\cW(k)}(w) \arr \Aut_{\cX(k)} (x)$.  This is not enough to prove directly that there exists a good moduli space Zariski-locally (see Remark \ref{stab_hyp_remark}).  This leads to the natural question of what additional hypotheses need to be placed on a morphism $f: \cW \arr \cX$ where $\cW$ admits a good moduli space to imply that $\cX$ admits a good moduli space.  We prove the following theorem in Section \ref{etale_construction} (see Section \ref{notation_sec} for definitions):

\begin{thm} \label{etale_existence_thm}
Let $\cX$ be an Artin stack locally of finite type over an excellent base $S$.  Suppose there exists an \'etale, surjective, pointwise stabilizer preserving and universally weakly saturated morphism $f: \cX_1 \arr \cX$ such that there exist a good moduli space $\phi_1: \cX_1 \arr Y_1$. Then there exists a good moduli space $\phi: \cX \arr Y$ inducing a cartesian diagram
$$\xymatrix{
\cX_1 \ar[r]^f \ar[d]^{\phi_1}	& \cX \ar[d]^{\phi} \\
Y_1 \ar[r]					& Y.
}$$
\end{thm}

We offer an application of this theorem proving that the existence of a good moduli space only depends on the reduced structure (see Corollary \ref{red_cor}).  

This theorem may be of use in practice to prove existence of good moduli spaces for certain Artin stacks which can be shown to admit \'etale presentations as quotient stacks.  Conversely, if we assume that there exists a good moduli space $\cX \arr Y$, then one might hope to show the local quotient conjecture is true by showing that \'etale locally on $Y$, $\cX$ is a quotient stack by the stabilizer.

\subsection*{Acknowledgments}   I thank Max Lieblich, Martin Olsson, and Ravi Vakil for useful suggestions. 

\section{Notation} \label{notation_sec}

We will assume schemes and algebraic spaces to be quasi-separated.   We will work over a fixed base scheme $S$.  An Artin stack over $S$, in this paper, will have a quasi-compact and separated diagonal.

\subsection*{Good moduli spaces}

We recall the following two definitions and their essential properties from \cite{alper_good_arxiv}.

\begin{defn} (\cite[Definition 3.1]{alper_good_arxiv}) A morphism $f: \cX \arr \cY$ of Artin stacks is \emph{cohomologically affine} if $f$ is quasi-compact and the push-forward functor on quasi-coherent sheaves
$$f_*: \QCoh(\cX) \arr \QCoh(\cY) $$
is exact.  We say that an Artin stack $\cX$ is \emph{cohomologically affine} if the morphism $\cX \to \Spec \ZZ$ is cohomologically affine. 
\end{defn}

If $f: \cX \arr \cY$ is a representable morphism of Artin stacks where $\cY$ has quasi-affine diagonal, then $f$ is cohomologically affine if and only if $f$ is affine.  Cohomologically affine morphisms are stable under composition and base change (if the target has quasi-affine diagonal) and are local on the target under faithfully flat morphisms.  The above and further properties appear in \cite[Section 3]{alper_good_arxiv}.

\begin{defn} (\cite[Definition 4.1]{alper_good_arxiv})  \label{defn_good}
A morphism $\phi: \cX \arr Y$, with $\cX$ an Artin stack and $Y$ an algebraic space, is a \emph{good moduli space} if:
\begin{enumeratei}
\item $\phi$ is cohomologically affine.
\item The natural map $\oh_Y \iso \phi_* \oh_{\cX}$ is an isomorphism of sheaves.
\end{enumeratei}
\end{defn}

\begin{remark}  If $\cX$ is a cohomologically affine Artin stack, then the natural morphism $\cX \to \Spec \Gamma(\cX, \oh_{\cX})$ is a good moduli space.
\end{remark}

If $\phi: \cX \arr Y$ is a good moduli space, then $\phi$ is surjective, universally closed, universally submersive, and has geometrically connected fibers \cite[Theorem 4.16]{alper_good_arxiv}.  If $\cX$ is locally noetherian, then $\phi: \cX \to Y$ is universal for maps to algebraic spaces \cite[Theorem 6.6]{alper_good_arxiv}.  They are stable under arbitrary base change on $Y$ and are local in the fpqc topology on $Y$ \cite[Proposition 4.7]{alper_good_arxiv}.  Furthermore, they satisfy the strong geometric property that if $\cZ_1, \cZ_2 \subseteq \cX$ are closed substacks, then scheme-theoretically $\im \cZ_1 \cap \im \cZ_2 = \im (\cZ_1 \cap \cZ_2)$ \cite[Theorem 4.16(iii)]{alper_good_arxiv}.  This implies that for an algebraically closed $\oh_S$-field $k$, there is a bijection between isomorphism classes of objects in $\cX(k)$ up to closure equivalence and $k$-valued points of $Y$ (i.e., for points $x_1,x_2: \Spec k \arr \cX$, $\phi(x_1) = \phi(x_2)$ if and only if $\overline{ \{x_1\}} \cap \overline{ \{x_2\} } \ne \emptyset$ in $\cX \times_S k$).  Furthermore, we have the following generalization of Hilbert's 14th Problem:  if $S$ is an excellent scheme and $\cX$ is finite type over $S$, then $Y$ is finite type over $S$ \cite[Theorem 4.16(xi)]{alper_good_arxiv}.

\subsection*{Stabilizer preserving morphisms}
If $\cX$ is an Artin stack over $S$, recall that the inertia stack is defined as the fiber product
$$\xymatrix{
I_{\cX} \ar[r] \ar[d]	& \cX \ar[d]^{\Delta} \\
\cX \ar[r]^{\Delta}				& \cX \times_S \cX ,
}$$
where $\Delta : \cX \to \cX \times_S \cX$ is the diagonal.  We quickly recall the following definition introduced in \cite{alper_quotient}:

\begin{defn} Let $f: \cX \arr \cY$ be a morphism of Artin stacks.  We define:
\begin{enumeratei}
\item $f$ is \emph{stabilizer preserving} if the induced $\cX$-morphism $\psi: I_{\cX} \arr I_{\cY} \times_{\cY} \cX$ is an isomorphism.  
\item For $\xi \in |\cX|$, $f$ is \emph{stabilizer preserving at $\xi$} if for a (equivalently any) geometric point $x: \Spec k \arr \cX$ representing $\xi$, the fiber $\psi_x: \Aut_{\cX(k)}(x) \arr \Aut_{\cY(k)} (f(x))$ is an isomorphism of \emph{group schemes} over $k$.
\item $f$ is \emph{pointwise stabilizer preserving} if $f$ is stabilizer preserving at $\xi$ for all $\xi \in |\cX|$.
\end{enumeratei}
\end{defn}

\begin{remark} 
Any morphism of algebraic spaces is stabilizer preserving and any pointwise stabilizer preserving morphism is representable.  It is easy to see that both properties are stable under composition and base change.  While a stabilizer preserving morphism is clearly pointwise stabilizer preserving, the converse is not true as the following example illustrates.
\end{remark}

\begin{example} \label{example}
The following example shows that it is too much to hope for that every Artin stack Zariski-locally admits a good moduli space around a closed point with linearly reductive stabilizer.   Let $X$ be the non-separated plane attained by gluing two planes $\AA^2 = \Spec k[x,y]$ along the open set $\{x \ne 0\}$.  The action of $\ZZ_2$ on $\Spec k[x,y]_x$ given by $(x,y) \mapsto (x,-y)$ extends to an action of $\ZZ_2$ on $X$ by swapping and flipping the axis.  Then $\cX = [X / \ZZ_2]$ is a non-separated Deligne-Mumford stack.  Rydh shows in \cite[Example 7.15]{rydh_quotients} that there is no neighborhood of the origin of this stack that admits a morphism to an algebraic space which is universal for maps to schemes.  In particular, there cannot exist a neighborhood of the origin which admits a good moduli space. 
\end{example}

\subsection*{Weakly saturated morphisms}

We also recall the notion of a weakly saturated morphism which was introduced in \cite{alper_quotient}.  This notion is an essential ingredient in determining when good moduli spaces can be glued \'etale locally (see Theorem \ref{etale_existence_thm}).

\begin{defn}  
A morphism $f: \cX \arr \cY$ of Artin stacks over an algebraic space $S$ is \emph{weakly saturated} if for every geometric point $x: \Spec k \arr \cX$ with $x \in |\cX \times_S k|$ closed, the image $f_s(x) \in |\cY \times_S k|$ is closed.  A morphism $f: \cX \arr \cY$ is \emph{universally weakly saturated} if for every morphism of Artin stacks $\cY' \arr \cY$, $\cX \times_{\cY} \cY' \arr \cY'$ is weakly saturated.   
\end{defn}

\begin{remark} Although the above definition seems to depend on the base $S$, it is in fact independent:  if $S \arr S'$ is any morphism of algebraic spaces then $f$ is weakly saturated over $S$ if and only if $f$ is weakly saturated over $S'$.  Any morphism of algebraic spaces is universally weakly saturated.  If $f: \cX \arr \cY$ is a morphism of Artin stacks \emph{of finite type over $S$}, then $f$ is weakly saturated if and only if for every geometric point $s: \Spec k \arr S$, $f_s$ maps closed points to closed points.  If $f: \cX \arr \cY$ is a morphism of Artin stacks of finite type over $\Spec k$, then $f$ is weakly saturated if and only if $f$ maps closed points to closed points.
\end{remark}

\begin{remark}
The notion of weakly saturated is not stable under base change.  Consider the two different open substacks $\cU_1, \cU_2 \subseteq [\PP_1 / \GG_m]$ isomorphic to $[\AA^1 / \GG_m]$ over $\Spec k$.  Then 
$$\xymatrix{
\cU_1 \sqcup \cU_2 \sqcup \Spec k \sqcup \Spec k	\ar[r] \ar[d]		& \cU_1 \sqcup \cU_2 \ar[d] \\
\cU_1 \sqcup \cU_2 \ar[r]									& [\PP_1 / \GG_m]
}$$
is 2-cartesian and the induced morphisms $\Spec k \arr \cU_i$ are open immersions which are not weakly saturated.  This example shows that even \'etale, stabilizer preserving, surjective, weakly saturated morphisms may not be stable under base change by themselves which indicates that the \emph{universally weakly saturated} hypothesis in Theorem \ref{etale_existence_thm} is necessary.
\end{remark}

\section{Good moduli spaces for formal schemes}  \label{formal_good_sec}

In this section, we show that the theory of good moduli spaces carries over to the formal setting.  We will avoid using formal Artin stacks and make all statements and arguments using smooth, adic pre-equivalence relations.  We will also only consider the case where the good formal moduli spaces are formal schemes which suffices for our applications.  The theory of formal algebraic spaces has only been developed in the separated and locally noetherian case.  In Theorem \ref{formal_good_thm}, the noetherianness of the quotient should follow from the noetherian property of $\fU$ and the properties of good moduli spaces rather than being implicitly assumed.  Our main interest is in the case where the groupoid is induced from the inclusion of a residual gerbe of a closed point $\cG_{\xi} \hookarr \cX$ so that, in particular, the $Y_i$'s (to be defined below) are Artinian (dimension 0 noetherian schemes) and the formal good moduli space $\fY = \dlim Y_i$ is a formal affine scheme whose underlying topological space is a point.  

\subsection{Setup}  \label{subsection-setup}
 We begin by setting up the notation and making elementary remarks.

 \subsubsection{}
A \emph{smooth, adic formal $S$-groupoid} consists of source and target morphisms $\fs, \ft: \fR \rrarrows \fU$ of locally noetherian, separated formal algebraic spaces which are smooth and adic, an identity morphism $\fe: \fU \arr \fR$, an inverse $\fid: \fR \arr \fR$, and a composition $\fc: \fR \times_{\fs, \fU, \ft} \fR \arr \fR$ satisfying the usual relations.  If $\fJ$ is an ideal of definition of $\fU$, then $\fI := \fs^* \fJ$ is an ideal of definition of $\fR$ (since $\fs$ is adic), we set $U_n$ and $R_n$ to be the closed sub-algebraic spaces defined by $\fJ^{n+1}$ and $\fI^{n+1}$, respectively.  There are induced smooth $S$-groupoids  $s_n,t_n: R_n \rrarrows U_n$  with identity $e_n: U_n \arr R_n$, an inverse $i_n: R_n \arr R_n$, and a composition $c_n: R_n \times_{s_n,U_n,t_n} R_n \arr R_n$.  Set $\cX_n = [U_n / R_n]$.  Note that by \cite[Prop 3.9(iv)]{alper_good_arxiv} $\cX_n$ is cohomologically affine if and only if $\cX_0$ is.   

Let $\cX_n = [U_n / R_n]$ and suppose $\phi_n: \cX_n \arr Y_n$ is a good moduli space where $Y_n$ is a scheme for each $n$. Let $q_n: U_n \arr Y_n$ be the composite of $U_n \to \cX_n$ with $\phi_n: \cX_n \to Y_n$.   Since each $(\phi_{n})_*$ is exact, the induced map $\Gamma(Y_{n+1}, \oh_{Y_{n+1}}) \to \Gamma(Y_n, \oh_{Y_n})$ is surjective so there are closed immersions $Y_n \arr Y_{n+1}$.  The closed immersion $\cX_0 \hookarr \cX_n$ is defined by a coherent sheaf of ideals $\cI$ on $\cX_n$ such that $\cI^{n+1} = 0$.  The closed immersion $Y_0 \hookarr Y_n$ is defined by $\phi_* \cI$, which is nilpotent since $(\phi_* \cI)^{n+1} \subseteq \phi_* (\cI^{n+1}) = 0$.  It follows from \cite[I.10.6.3]{ega} that there exists a formal scheme $\fY = \dlim Y_i$ and that there is an induced morphism $\fq: \fU \arr \fY$.  We have the diagram:
\begin{equation} \label{formal_diagram}
\xymatrix{
R_0 \ar[r] \ar@<-.5ex>[d] \ar@<.5ex>[d] 		& R_1 \ar[r] \ar@<-.5ex>[d] \ar@<.5ex>[d] 	& \cdots \ar[r] 	& \fR \ar@<-.5ex>[d] \ar@<.5ex>[d] \\
U_0 \ar[r] \ar[d]		& U_1 \ar[r] \ar[d]	& \cdots \ar[r] 	& \fU   \ar[dd]^{\fq}\\
\cX_0 \ar[r] \ar[d]		& \cX_1 \ar[r] \ar[d] 		& \cdots\\
Y_0 \ar[r]				& Y_1 \ar[r]			& \cdots \ar[r]	& \fY,
}
\end{equation}
where all appropriate squares are 2-commutative and the appropriate squares in the top and middle rows are 2-cartesian.  Note that the squares in the bottom row are not necessary cartesian.  There should exist a geometric object $\h{\cX}$ (i.e., a formal Artin stack) filling in the above diagram for which $\fq$ factors through.

We note that the formal scheme $\fY$ and the morphism $\fq: \fU \arr \fY$ do not depend on the choice of the ideal of definition.

We do not know a priori that $\fY$ is locally noetherian.  In particular, if each $Y_i = \Spec A_i$ is an affine scheme, it is not immediate that the topological ring $\ilim A_i$ is either adic or noetherian.

\subsubsection{} \label{groupoid_start}
Recall that $\fq$ denotes the morphism $\fq: \fU \to \fY$.  There is a natural map $\oh_{\fY} \arr (\fq_* \oh_{\fU})^{\fR}$, where $(\fq_* \oh_{\fU})^{\fR}$ denotes the sheaf of topological rings on $\fY$ which assigns to an open $V \subseteq \fY$,  the equalizer
$$\oh_{\fU} (\fq^{-1}(V)) \rrarrows \oh_{\fR} ( (\fq \circ \ft)^{-1} (V));$$
clearly $\oh_{\fY}(V) \arr \oh_{\fU}(\fq^{-1}(V))$ factors through this equalizer as $\fq \circ \fs = \fq \circ \ft$.  

\subsubsection{} More generally, if $\fF$ is a coherent $\oh_{\fU}$-module, an \emph{$\fR$-action} on $\fF$ is an isomorphism $\alpha: \fs^* \fF \arr \ft^* \fF$ satisfying the usual cocycle condition on $\fR \times_{t,\fU, s} \fR$.  If $F_n$ denotes the pullback of $\fF$ to $U_n$, then $F_n$ inherits a $R_n$-action and therefore descends to a coherent sheaf $\cF_n$ of $\oh_{\cX_n}$-modules.  We will denote by $(\fq_* \fF)^{\fR}$ the sheaf of $\oh_{\fY}$-modules defined by the equalizer
$$\xymatrix{
\fq_* \fF \ar@<.5ex>[r]^{\ft^*} \ar@<-.5ex>[r]_{\alpha \circ \fs^*}		& (\fq \circ \ft)_* \ft^* \fF.
}$$
If there were a formal stack $\h \cX$, then $(\fq_*\fF)^{\fR}$ should simply be the push forward under $\h \cX \arr \fY$ of the descended sheaf of $\oh_{\h \cX}$-modules $\h \fF$.  We also write $\Gamma(\fU, \fF)^{\fR} = \Gamma(\fU, (\fq_* \fF)^{\fR})$.
 
It is not obvious that $(\fq_* \fF)^{\fR}$ is coherent but we will show in Theorem \ref{formal_good_thm} that this is true if $Y_0$ is Artinian.   The morphisms $(\fq_* \fF)^{\fF} \arr ((q_i)_* F_i)^{R_i} = (\phi_i)_* \cF_i$ induces a morphism of $\oh_{\fY}$-modules
\begin{equation} \label{sheaf_mor}
(\fq_*\fF)^{\fR} \arr \ilim (\phi_i)_* \cF_i. 
\end{equation}
\subsubsection{}  If $\fI$ is a coherent sheaf of ideals in $\oh_{\fU}$, we say that $\fI$ is \emph{$\fR$-invariant} if $\fs^*\fJ = \ft^* \fJ $.  The sheaf $\fI$ therefore inherits an $\fR$-action.  We say that a closed sub-algebraic space $\fZ \subseteq \fU$ is \emph{$\fR$-invariant} if it is defined by an invariant sheaf of ideals.

\subsubsection{} \label{groupoid_end}
For any adic morphism of formal schemes $\fY' \arr \fY$, by taking fiber products, there is an induced diagram as in diagram (\ref{formal_diagram}).  There are source and target morphisms $\fs', \ft': \fR' \rrarrows \fU'$, an identity morphism $\fe': \fU' \arr \fR'$, an inverse $\fid': \fR' \arr \fR'$ and a composition $\fc': \fR' \times_{\fs', \fU', \ft'} \fR' \arr \fR'$ satisfying the usual relations.  Suppose further that $\fY'$, $\fY$, and $\fU' = \fY' \times_{\fY} \fU$ are locally noetherian.  
Then $(\fs', \ft': \fR' \rrarrows \fU', \fe', \fid')$ indeed defines a smooth, adic formal $S$-groupoid.  Because good moduli spaces are stable under arbitrary base change, there are good moduli spaces $\phi_i': \cX_i' \arr Y_i'$.  Furthermore, the induced morphisms $\dlim U'_i \arr \fU'$, $\dlim R'_i \arr \fR'$, and $ \dlim Y'_i \arr \fY'$ are isomorphisms. 

\subsection*{Formal good moduli spaces}

\begin{thm} \label{formal_good_thm}
Assume the notation above. 
\begin{enumeratei}
\item \label{formal_functions}
	The natural map $\oh_{\fY} \arr (\fq_* \oh_{\fU})^{\fR}$ is an isomorphism of sheaves of topological rings.
\item \label{formal_exact}
	The functor from coherent sheaves on $\fU$ with $\fR$-actions to sheaves on $\fY$ given by $\fF \mapsto (\fq_* \fF)^{\fR}$ is exact.  Furthermore, the morphism $(\fq_*\fF)^{\fR} \arr \ilim (\phi_i)_* \cF_i$ is an isomorphism of topological $\oh_{\fY}$-modules.

\item \label {formal_sur}
	$\fq$ is surjective.

\item \label{formal_closed}
	If $\fZ \subseteq \fU$ is a closed, $\fR$-invariant formal subscheme, then $\fq(\fZ)$ is closed, 

\item \label{formal_set_sep}
If $\fZ_1, \fZ_2 \subseteq \fU$ are closed, $\fR$-invariant formal subschemes, then set-theoretically 
	$$\fq(\fZ_1) \cap \fq(\fZ_2) = \fq(\fZ_1 \cap \fZ_2).$$

\item \label{formal_univ}	
	$\fq$ is universal for $\fR$-invariant maps to formal schemes.  That is, given a morphism $\psi: \fU \arr \fW$ where $\fW$ is a formal scheme such that $\fs \circ \psi = \ft \circ \psi$, then there exists a unique morphism $\chi: \fY \arr \fW$ such that $\chi \circ \fq = \psi$.

\item \label{formal_noeth}
If $\fY= \Spf A$ is an affine formal scheme, then $A$ is noetherian.

\end{enumeratei}
\smallskip
Suppose furthermore that $\dim Y_0 = 0$ (i.e., $Y_0$ is an Artinian scheme).
\smallskip
\begin{enumeratei} \setcounter{enumi}{7}

\item \label{formal_adic}
$\fY$ is a locally noetherian formal scheme.  In particular, if $\fY = \Spf A$ and $m = \ker(A \arr A_0)$, then $A$ is an $m$-adic noetherian ring.

\item \label{formal_coherence}
If $\fF$ is a coherent sheaf of $\fU$ with $\fR$-action, then $(\fq_* \fF)^{\fR}$ is a coherent $\fY$-module.

\item \label{formal_sep}
	If $\fI$ and $\fJ$ are two $\fR$-invariant coherent ideals in $\oh_{\fU}$, then the natural map
	$$ (\fq_* \fI)^{\fR} + (\fq_* \fJ)^{\fR} \arr (\fq_* (\fI + \fJ))^{\fR}$$
is an isomorphism.  If $\fZ_1$ and $\fZ_2$ are $\fR$-invariant formal closed subschemes, then scheme-theoretically
$$ \im \fZ_1 \cap \im \fZ_2 = \im (\fZ_1 \cap \fZ_2),$$
where $\im \fZ$ denotes the scheme-theoretic image of $\fZ$ under $\fq: \fU \arr \fY$ and is defined by the coherent sheaf of ideals $\ker(\oh_{\fY} \arr \fq_* \oh_{\fZ})$.  
\end{enumeratei}
\end{thm}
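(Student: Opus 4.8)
The plan is to prove every assertion by reducing it to the corresponding statement about the good moduli spaces $\phi_i\colon\cX_i\arr Y_i$ at finite level, where the results recalled from \cite{alper_good} apply directly, and then passing to the inverse limit along the tower \eqref{formal_diagram}. The limit steps will use two standard facts: the identification of a coherent sheaf on a locally noetherian formal scheme with the compatible system of its reductions to the $U_i$ (\cite[I.10]{ega}), and the Artin--Rees lemma, which I would invoke repeatedly to see that the inverse systems that occur are Mittag--Leffler, so that the relevant $\varprojlim^1$ vanish and $\varprojlim$ commutes with the equalizers and pushforwards in sight.

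For \ref{formal_functions} and \ref{formal_exact}: since $\oh_\fU=\varprojlim\oh_{U_i}$ and $\oh_\fR=\varprojlim\oh_{R_i}$ as sheaves of topological rings, the equalizer defining $(\fq_*\oh_\fU)^\fR$ is the inverse limit of the equalizers $((q_i)_*\oh_{U_i})^{R_i}=\oh_{Y_i}$, whose limit is $\oh_\fY$ (and the topologies agree, both being inverse-limit topologies); the same computation with a coherent $\fR$-equivariant $\fF=\varprojlim F_i$ identifies $(q_*\fF)^\fR$ with $\varprojlim(\phi_i)_*\cF_i$. For exactness of $\fF\mapsto(q_*\fF)^\fR$ I would take a short exact sequence $0\arr\fF'\arr\fF\arr\fF''\arr0$, note that reduction modulo $\fI^{i+1}$ is only right exact but that by Artin--Rees the pro-system $\{F_i'\}$ is isomorphic to $\{\ker(F_i\arr F_i'')\}$, both Mittag--Leffler, then descend to $\cX_i$, apply the exact functor $(\phi_i)_*$, and take $\varprojlim$. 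Parts \ref{formal_sur}, \ref{formal_closed}, \ref{formal_set_sep} concern only underlying topological spaces, and since $|\fU|=|U_0|$, $|\fR|=|R_0|$, $|\fY|=|Y_0|$ they follow from surjectivity of $q_0$, from universal closedness of $\phi_0$ applied to the closed substack of $\cX_0$ obtained by descending $\fZ\cap U_0$, and from the set-theoretic image-intersection property of $\phi_0$, using $(\fZ_1\cap\fZ_2)\cap U_0=(\fZ_1\cap U_0)\cap(\fZ_2\cap U_0)$. For \ref{formal_univ} I would reindex the defining tower of $\fW$ so that a morphism $\psi$ with $\fs\psi=\ft\psi$ yields a compatible system of $R_i$-invariant morphisms $U_i\arr W_i$, factor each uniquely through $\phi_i$ by the universal property of good moduli spaces, and pass to the colimit.

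The real content lies in \ref{formal_noeth}--\ref{formal_sep}, and the hard part is \ref{formal_adic}: since an inverse limit of noetherian rings need not be noetherian, $A=\varprojlim A_i$ cannot be controlled formally, and I would argue instead that, when $\dim Y_0=0$, the tower arises — possibly after a flat base change — from an action of a linearly reductive group scheme $G$ on a noetherian adic ring $B$ (so that $\cX_i\cong[\Spec B_i/G]$ compatibly and $A=B^G$); this uses that a good moduli space forces the finitely many closed points of the zero-dimensional stack $\cX_0$ to have linearly reductive stabilizer, together with deformation theory to lift the presentation along the thickenings. Noetherianity of $B^G$, and the fact that its topology is the $m$-adic one, is then the formal analogue of Hilbert's fourteenth problem, which I will establish for linearly reductive group schemes in Section \ref{formal_git_section} by writing $B$ as an equivariant quotient of a power series ring on a $G$-representation and reducing to the finitely generated (classical) case; the same input gives \ref{formal_noeth} whenever $\fY$ is affine. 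Granting \ref{formal_adic}, part \ref{formal_coherence} follows because $(q_*\fF)^\fR=\varprojlim(\phi_i)_*\cF_i$ is an inverse limit of finite $A_i$-modules along surjections whose reduction modulo $m^{i+1}$ is $(\phi_i)_*\cF_i$ — the ideal of $\cX_0$ in $\cX_i$ is nilpotent of order $i+1$, so $m^{i+1}\subseteq\ker(A\arr A_i)$, and $m$-adic completeness shows these filtrations cofinal — hence is finite over $A$ by complete Nakayama. Finally \ref{formal_sep} follows: the natural sum map is a morphism of coherent $\oh_\fY$-modules that becomes an isomorphism after reduction modulo each $m^{i+1}$ — this reduction being exactly the scheme-theoretic image-intersection identity for $\phi_i$ from \cite{alper_good} — hence is an isomorphism by completeness, and the statement on scheme-theoretic images is its reformulation through the exact sequences of \ref{formal_exact}. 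I expect the most delicate point overall to be propagating the equivariant quotient presentation compatibly through all the nilpotent thickenings while keeping the ideal of definition under control.
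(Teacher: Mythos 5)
Your treatment of parts (\ref{formal_functions})--(\ref{formal_univ}) and, granting (\ref{formal_adic}), of (\ref{formal_coherence})--(\ref{formal_sep}) is essentially the paper's argument (finite-level statements for the $\phi_i$ plus Mittag--Leffler/Artin--Rees passage to the limit, topological reductions to $\cX_0$), and is fine. The genuine gap is in your plan for (\ref{formal_noeth}) and (\ref{formal_adic}). You propose to deduce noetherianity and adicness of $A$ from a structural claim that, after a flat base change, the tower $\cX_i$ is compatibly of the form $[\Spec B_i/G]$ for a single linearly reductive group scheme $G$, and then to invoke a formal Hilbert--Nagata theorem for $B^G$. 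In the generality of the theorem, $\cX_0$ is only a cohomologically affine stack with $Y_0$ Artinian over an arbitrary locally noetherian base $S$: it need not be a residual gerbe over a field, it may have several closed points with varying stabilizers and imperfect residue fields, and nothing in the hypotheses produces the group $G$ or lets "deformation theory" lift a presentation compatibly along the thickenings. This compatible equivariant presentation is precisely the sort of statement the paper records only as a conjecture (the quotient conjecture of \cite{alper_quotient}), so it cannot be assumed. Moreover, within the paper the formal GIT of Section \ref{formal_git_section} is a \emph{corollary} of this theorem, so your reduction needs an independent proof there; the sketch "write $B$ as an equivariant quotient of a power series ring on a $G$-representation and reduce to the classical case" hides both the existence of an equivariant presentation and finite generation of invariants over a general (not necessarily excellent, not necessarily a field) base --- exactly the input the theorem is designed to avoid. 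Finally, (\ref{formal_noeth}) is asserted \emph{without} the hypothesis $\dim Y_0=0$, whereas your route only reaches it under that hypothesis.

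For contrast, the paper's arguments are direct and need no group action. For (\ref{formal_noeth}): given an ideal $I\subseteq A$ with images $I_n\subseteq A_n$, the expansions $J_n=q_n^*\tilde I_n\cdot\oh_{U_n}$ glue to an $\fR$-invariant coherent ideal $\fJ\subseteq\oh_{\fU}$, and \cite[Lemma 4.12]{alper_good} together with part (\ref{formal_exact}) identifies $I\iso\Gamma(\fU,\fJ)^{\fR}$; hence an ascending chain of ideals of $A$ embeds order-preservingly into coherent ideal sheaves on the noetherian $\fU$ and terminates. For (\ref{formal_adic}): with $I_n=\ker(A\arr A_n)$ one has $I_0^{n+1}\subseteq I_n$, since $(\phi_{n*}\cI)^{n+1}\subseteq\phi_{n*}(\cI^{n+1})=0$, and Artinian-ness of $A/I_0^n$ forces the chain $\{I_k\}$ to stabilize modulo $I_0^n$, giving $I_k\subseteq I_0^n$ for some $k$; so the limit topology on $A$ is $I_0$-adic, and (\ref{formal_coherence}) then follows from \cite[0.7.2.9]{ega} as you indicate. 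If you wish to keep your plan, you must actually prove the compatible quotient presentation of the thickenings, which is a substantially harder (and here unnecessary) result.
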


\begin{proof}  For (\ref{formal_functions}), for each $n$ we have an exact sequence
$$\oh_{Y_n} \arr (q_n)_* \oh_{U_n} \rrarrows (q_n \circ t_n)_* \oh_{R_n}.$$
By taking inverse limits, we get that $\oh_{\fY} = \ilim \oh_{Y_n}$ is naturally identified with the equalizer of $\fq_* \oh_{\fU} \rrarrows (\fq \circ \ft)_* \oh_{\fR}$, which is the definition of $(\fq_* \oh_{\fU})^{\fR}$.

For (\ref{formal_exact}), we first note that the above argument generalizes to show that the morphism (\ref{sheaf_mor}) is an isomorphism of topological $\oh_{\fY}$-modules.  Indeed, for each $n$ we have an exact sequence
$$(\phi_n)_* \cF_n \arr (q_n)_* F_n \rrarrows (q_n \circ t_n)_* t_n^* F_n,$$
and by taking inverse limits, we get that $\ilim (\phi_n)_* \cF_n$ is identified with the equalizer $\fq_* \fF \rrarrows (\fq \circ \ft)_* \ft^* \fF$.  The functor $\fF \mapsto (\fq_* \fF)^{\fR}$ is clearly left exact.  Consider a surjection $\fF \mapsonto \fG$ of coherent $\oh_{\fU}$-modules with $\fR$-action, which induces surjections $F_n \mapsonto G_n$ of coherent $\oh_{U_n}$-modules with $R_n$-action and $\cF_n \mapsonto \cG_n$ of coherent $\oh_{\cX_n}$-modules.  Since $(\phi_n)_*$ is exact, we have that $(\phi_n)_* \cF_n \mapsonto (\phi_n)_* \cG_n$ is surjective.  Furthermore, the inverse system $((\phi_n)_* \cG_n)$ is Mittag-Leffler (i.e., $(\phi_{n+1})_* \cG_{n+1} \mapsonto (\phi_n)_* \cG_n$ is surjective) since $\phi_{n+1}$ is exact.  Therefore, 
$$\ilim (\phi_n)_* \cF_n \mapsonto \ilim (\phi_n)_* \cG_n$$  
is surjective and is identified with $(\fq_* \fF)^{\fR} \mapsonto (\fq_* \fG)^{\fR}$.

Since properties (\ref{formal_sur}), (\ref{formal_closed}), and (\ref{formal_set_sep}) are topological, they follow directly from the corresponding property for good moduli spaces (\cite[Theorem 4.16(i),(ii) and (iii)]{alper_good_arxiv}).

For (\ref{formal_univ}), the argument of \cite[Proposition 0.1 and Remark (5) on p.\ 8]{git} adapts to this setting as in \cite[Theorem 4.15(vi)]{alper_good_arxiv}.

For (\ref{formal_noeth}), let $I \subseteq A$ be an ideal.  Let $I_n = \pi_n(I) \subseteq A_n$ where $Y_n = \Spec A_n$ and $\pi_n: A \mapsonto A_n$.  The closed sub-algebraic space 
$$U'_n = U_n \times_{\Spec A_n} \Spec A_n / I_n \hookarr U_n$$
 is defined by the sheaf of ideals $J_n = q_n^* {\tilde I_n} \cdot \oh_{U_n}$.  Then $\fU' = \dlim U'_n$ is the closed formal sub-algebraic space of $\fU$ defined by the coherent sheaf of ideals $\fJ = \ilim J_n$.  The sheaf $J_n$ is $R_n$-invariant descending to a coherent sheaf of ideals $\cJ_n$ in $\oh_{\cX_n}$.  By \cite[Lemma 4.12]{alper_good_arxiv}, $I_n \arr \Gamma(\cX_n, \cJ_n)$ is an isomorphism and therefore by part (\ref{formal_exact}), in the diagram
$$\xymatrix{
I \ar[d] \ar[r]					& \ilim I_n \ar[d] \\
\Gamma(\fU, \fJ)^{\fR} \ar[r] 		& \ilim \Gamma(\cX_n, \cJ_n),
}$$
the bottom row is an isomorphism.  It follows that the left vertical arrow is an isomorphism.  Since $\fU$ is noetherian, it follows that any ascending chain $I^{(1)} \subseteq I^{(2)} \subseteq \cdots$ of ideals in $A$ terminates.

For (\ref{formal_adic}) and (\ref{formal_coherence}), we may assume $\fY = \Spf A$ where $A$ is a noetherian ring by (\ref{formal_noeth}).  We must show that $A$ is an adic ring.  Let $I_n = \ker (A \arr A_n)$.  Clearly, $I_n \supseteq I_0^n$.  Since $A/I_0^n$ is Artinian, the descending chain $I_0 \supseteq I_1 \supseteq \cdots$ of ideals in $A/I_0^n$  terminates so that there exists $k$ such that $I_0^n \supseteq I_k$.  This implies that $I_0^n$ is open so that $A$ is $I_0$-adic.  Similarly, $M = \Gamma(\fU, \fF)^{\fR} = \ilim \Gamma(\cX_i, \cF_i)$ is Hausdorff and complete with respect to the $I_0$-adic topology.  It follows from \cite[0.7.2.9]{ega} that $M$ is a finitely generated $A$-module.

For (\ref{formal_sep}), we have the identifications $(\fq_* \fI)^{\fR} = \ilim (\phi_n)_* \cI_n$, $(\fq_* \fJ)^{\fR} = \ilim (\phi_n)_* \cJ_n$ and $\fq_* (\fI+ \fJ)^{\fR} = \ilim (\phi_n)_* (\cI_n + \cJ_n)$ where $\cI_n$ and $\cJ_n$ are the corresponding sheaf of ideals on $\cX_n$.  For each $n$, by \cite[Lemma 4.9]{alper_good_arxiv}, the inclusion $(\phi_n)_* \cI_n + (\phi_n)_* \cJ_n \arr (\phi_n)_* (\cI_n + \cJ_n)$ is an isomorphism.  By taking inverse limits,
$$\ilim ((\phi_n)_* \cI_n + (\phi_n)_* \cJ_n) \arr \ilim  (\phi_n)_* (\cI_n + \cJ_n)$$
is an isomorphism.  Since
$$ \ilim (\phi_n)_* \cI_n + \ilim (\phi_n)_* \cJ_n \arr \ilim ((\phi_n)_* \cI_n + (\phi_n)_* \cJ_n)$$
is also an isomorphism, we have that  $(\fq_* \fI)^{\fR} + (\fq_* \fJ)^{\fR} \arr (\fq_* (\fI + \fJ))^{\fR}$ is an isomorphism.  The final statement follows from the identification of the coherent sheaf of ideals $(\fq_* \fI)^{\fR}$ with $\ker(\oh_{\fY} \arr \fq_* \oh_{\fZ})$. \end{proof}

\begin{remark}  As in \cite{alper_good_arxiv}, we contend that properties (i) and (ii) should in fact define the notion of a \emph{formal good moduli space} and these two properties alone should imply the others.  However, this theory would best be developed in the language of formal stacks which we are avoiding in this paper.
\end{remark}

\subsection*{Groupoids induced from closed substacks}
Let $\cX$ be a noetherian Artin stack and $\cZ$ be a closed substack which is cohomologically affine (i.e., that this means that $\cZ \to \Spec \ZZ$ is cohomologically affine).  Then $\cZ$ together with a presentation $U \arr \cX$ induces a smooth, adic formal $S$-groupoid and a diagram as in (\ref{formal_diagram}).  Let $\cX_0 = \cZ$ and $\cX_n$ is the closed substack corresponding to the $n$-th nilpotent thickening.  Set $U_i = U \times_{\cX} \cX_i$ and $R_i = R \times_{\cX} \cX_i$.  Then the smooth $S$-groupoids $R_i \rrarrows U_i$ induces the smooth, adic formal $S$-groupoid $\fR \rrarrows \fU$ where $\fU = \dlim U_i$ and $\fR = \dlim R_i$ (with the source, target, identity, inverse and composition morphisms defined in the obvious way).  
 
 Since $\cX_0$ is cohomologically affine, its nilpotent thickenings $\cX_n$ are also cohomologically affine.  Therefore, there are good moduli spaces $\phi_n: \cX_n \arr Y_n$.  If $\fY = \dlim Y_i = \Spec \ilim \Gamma(\cX_n, \oh_{\cX_n}) $, there is an induced $\fR$-invariant morphism $\fq: \fU \arr \fY$ and we can apply the above theorem to conclude the following:

\begin{cor} \label{local_good_cor}
Suppose $\cZ $ is a closed, cohomologically affine substack of a noetherian Artin stack $\cX$ such that $\Gamma(\cZ, \oh_{\cZ})$ is Artinian.  Then with the notation above, there is an induced morphism $\fq: \fU \arr \fY$ satisfying the properties {\rm (i)} through {\rm (x)} in Theorem $\ref{formal_good_thm}$. 
\epf
\end{cor}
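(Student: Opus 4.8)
The plan is to show that the construction recalled in (\ref{local_substack}) exhibits exactly the data to which Theorem \ref{formal_good_thm} applies, and that the Artinian hypothesis is precisely what supplies the condition $\dim Y_0 = 0$ needed for parts (viii)--(x) of that theorem.

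First I would check the input. Fix a smooth presentation $U \arr \cX$ by a scheme; since $\cX$ is noetherian, $U$ is locally noetherian. With $U_i = U \times_{\cX} \cX_i$ and $R_i = R \times_{\cX} \cX_i$, the $U_i$ (resp. $R_i$) are the infinitesimal neighborhoods of $U_0$ in $U$ (resp. of $R_0$ in $R$), so $\fU = \dlim U_i$ and $\fR = \dlim R_i$ are the formal completions of $U$ along $U_0$ and of $R$ along $R_0$, hence locally noetherian formal schemes; the source and target $\fs, \ft \colon \fR \rrarrows \fU$, being completions of the smooth morphisms $R \rrarrows U$ along compatible closed subschemes, are smooth and adic, and together with the induced identity, inverse and composition they form a smooth, adic $S$-groupoid of locally noetherian formal schemes with $\cX_n = [U_n / R_n]$.

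Next I would produce the good moduli spaces. Since $\cX_0 = \cZ$ is cohomologically affine, \cite[Prop 3.9(iv)]{alper_good} shows each nilpotent thickening $\cX_n$ is cohomologically affine. For a cohomologically affine noetherian stack the morphism $\cX_n \arr Y_n := \Spec \Gamma(\cX_n, \oh_{\cX_n})$ is a good moduli space --- this is immediate from Definition \ref{defn_good}, since on the affine target exactness of $(\phi_n)_*$ is equivalent to exactness of $\Gamma(\cX_n, -)$ and the isomorphism $\oh_{Y_n} \iso (\phi_n)_* \oh_{\cX_n}$ holds by construction --- and $Y_n$ is a scheme. The closed immersions $\cX_0 \hookarr \cX_n$ induce closed immersions $Y_0 \hookarr Y_n$, so $\fY = \dlim Y_i = \Spec \ilim \Gamma(\cX_n, \oh_{\cX_n})$ together with its $\fR$-invariant morphism $q \colon \fU \arr \fY$ is exactly the output of the setup (\ref{formal_notation}). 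Theorem \ref{formal_good_thm} therefore applies verbatim and yields properties (i)--(vii).

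Finally, $Y_0 = \Spec \Gamma(\cZ, \oh_{\cZ})$, and the hypothesis that $\Gamma(\cZ, \oh_{\cZ})$ is Artinian gives $\dim Y_0 = 0$; hence the additional conclusions (viii)--(x) of Theorem \ref{formal_good_thm} hold as well. The only points that need care --- and so the main, if modest, obstacle --- are the two verifications above: that $\fR \rrarrows \fU$ genuinely is an adic, smooth groupoid with locally noetherian total spaces, and that the cohomologically affine noetherian stacks $\cX_n$ admit good moduli spaces which are \emph{schemes} (not merely algebraic spaces), so that we are literally in the situation of (\ref{formal_notation}). Both follow from standard properties of formal completions and of cohomologically affine stacks, so the corollary is essentially formal once (\ref{local_substack}) and Theorem \ref{formal_good_thm} are in hand.
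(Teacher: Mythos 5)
Your proposal is correct and follows the same route as the paper, which gives no separate argument beyond the setup in (\ref{local_substack}): the nilpotent thickenings $\cX_n$ are cohomologically affine, hence admit affine good moduli spaces $Y_n = \Spec \Gamma(\cX_n,\oh_{\cX_n})$, the completed presentation gives the smooth adic groupoid $\fR \rrarrows \fU$, and Theorem \ref{formal_good_thm} applies, with the Artinian hypothesis supplying $\dim Y_0 = 0$ for parts (viii)--(x). Your extra verifications (that the completions are locally noetherian with smooth adic source/target, and that a cohomologically affine noetherian stack maps to $\Spec \Gamma(\cX_n,\oh_{\cX_n})$ as a good moduli space) are exactly the points the paper leaves implicit.
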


The corollary above implies that there is an isomorphism of topological rings 
$$\ilim \Gamma(\cX_n, \oh_{\cX_n}) \arr (\ilim \Gamma(U_n, \oh_{U_n}))^{\fR}.$$
   If there exists a good moduli space $ \cX \arr Y$, it is natural to compare these topological rings with the complete local ring induced by the image of $\cZ$.

\begin{prop}  \label{proposition-formal}
Suppose $\cX$ is a locally noetherian Artin stack admitting a good moduli space $\phi: \cX \arr Y$ and $\cZ \subseteq \cX$ is a closed substack defined by a sheaf of ideals $\cI$.  Let $\cX_n$ be the nilpotent thickenings of $\cZ$ defined by $\cI^{n+1}$.  If $\cZ \subseteq \cX$ is cohomologically affine and $\Gamma(\cZ, \oh_{\cX})$ is Artinian, then the image $y \in |Y|$ of $\cZ$ is a closed point and the induced morphism
$$ \h{\oh}_{Y,y} \arr \ilim \Gamma(\cX_n, \oh_{\cX_n})$$
is an isomorphism, where $\h{\oh}_{Y,y} = \ilim \Gamma(Y, \oh_Y / \cJ^n)$ and $\cJ$ defines the closed immersion $\Spec k(y) \hookarr Y$.  
\end{prop}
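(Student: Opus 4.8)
The plan is to identify the target $\ilim_n\Gamma(\cX_n,\oh_{\cX_n})$ with $\h\oh_{Y,y}$ by comparing two filtrations of $\oh_{Y,y}$, reducing the crux to a Krull‑intersection statement on $\cX$. As a preliminary, replacing $\cZ$ by its residual gerbe changes neither side — the $\cZ$‑adic and $(\cZ_{\mathrm{red}})$‑adic filtrations of $\oh_\cX$ are cofinal since $\cX$ is noetherian — so I may assume $\cZ=\cG_\xi$ and $A_0:=\Gamma(\cZ,\oh_\cZ)=k(y)$. As $\cZ$ is cohomologically affine with $A_0$ Artinian, Corollary~\ref{local_good_cor} applies: there are good moduli spaces $\phi_n\colon\cX_n\arr Y_n$, and since $\dim Y_0=0$ the ring $A:=\ilim_n\Gamma(\cX_n,\oh_{\cX_n})$ is noetherian and $\fm$‑adic with $\fm=\ker(A\arr A_0)$; each $A_n:=\Gamma(\cX_n,\oh_{\cX_n})=A/\ker(A\arr A_n)$ is then Artinian.

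\emph{Locating $Y_n$ and $y$ in $Y$.} Applying the exact functor $\phi_*$ to $0\arr\cI^{n+1}\arr\oh_\cX\arr\oh_{\cX_n}\arr0$ and using $\oh_Y\iso\phi_*\oh_\cX$ gives $\phi_*\oh_{\cX_n}\cong\oh_Y/\cK_n$, where $\cK_n:=\phi_*(\cI^{n+1})\subseteq\oh_Y$; by uniqueness of good moduli spaces $Y_n$ is the closed subscheme $V(\cK_n)\subseteq Y$, so $Y_0=\Spec k(y)$ is a closed point of $Y$ and $A_n=\Gamma(Y,\oh_Y/\cK_n)$. Multiplicativity of $\oh_Y=\phi_*\oh_\cX$ gives $\cK_0^{\,n+1}=(\phi_*\cI)^{n+1}\subseteq\phi_*(\cI^{n+1})=\cK_n\subseteq\cK_0$, so every $V(\cK_n)$ is supported at $y$ and hence $A_n=\oh_{Y,y}/(\cK_n)_y$. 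Therefore $A=\ilim_n\oh_{Y,y}/(\cK_n)_y$, whereas $\h\oh_{Y,y}=\ilim_k\oh_{Y,y}/\fm_y^k$ (the sheaf $\oh_Y/\cJ^k$ is supported at $y$, so its global sections are $\oh_{Y,y}/\fm_y^k$); the morphism of the proposition is the resulting continuous homomorphism $\theta\colon\h\oh_{Y,y}\arr A$, and its being an isomorphism is equivalent to cofinality of the filtrations $\{(\cK_n)_y\}_n$ and $\{\fm_y^k\}_k$ of $\oh_{Y,y}$.

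\emph{Proving $\theta$ is an isomorphism.} One inclusion is immediate: $A_n$ Artinian local forces $\fm_y^{k(n)}\subseteq(\cK_n)_y$. Next I would record that $\h\oh_{Y,y}$ is itself noetherian, by applying Corollary~\ref{local_good_cor} to the closed substack $\cX\times_Y\Spec k(y)\hookarr\cX$: its thickenings have good moduli spaces $\Spec\oh_{Y,y}/\fm_y^{m+1}$ (using the projection formula $\phi_*(\phi^*\cJ^{m+1}\!\cdot\oh_\cX)=\cJ^{m+1}$), so their inverse limit of functions is $\h\oh_{Y,y}$, which is hence $\fm_y$‑adically noetherian. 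Since $A_n$ is Artinian one has $\h\oh_{Y,y}/(\cK_n)_y\h\oh_{Y,y}=A_n$, so $\ker\theta=\bigcap_n(\cK_n)_y\h\oh_{Y,y}$; once this is shown to vanish, Chevalley's theorem (valid in the complete noetherian local ring $\h\oh_{Y,y}$) supplies the remaining cofinality and $\theta$ is bijective. To see $\bigcap_n(\cK_n)_y\h\oh_{Y,y}=0$, pull back to a smooth affine presentation $\Spec C\arr\cX$ over a neighbourhood of $y$: a $\phi$‑invariant section lying in all $\cI^{n+1}$ pulls back into $\bigcap_n(\cI C)^{n+1}$, which is zero at every point over $|\cZ|$ by Krull's intersection theorem in $C$; a short descent argument then shows such sections vanish near $y$, i.e. $\bigcap_n(\cK_n)_y=0$, and since each $(\cK_n)_y$ contains a power of $\fm_y$ this persists after completion. (Equivalently, granting $\fm_yA=\ker(A\arr A_0)$ one may run a successive‑approximation argument showing the local homomorphism $\theta$ of complete noetherian local rings — an isomorphism on residue fields — is surjective, with injectivity again being $\bigcap_n(\cK_n)_y\h\oh_{Y,y}=0$.)

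\textbf{The expected main obstacle} is precisely this last point: the formation of $\phi$‑invariants does not commute with powers of an ideal, so the genuine content is that $\phi_*(\cI^{n+1})$ becomes $\fm_y$‑adically small, and it is here that the Artinian hypothesis enters — it forces $\dim Y_0=0$, which is what lets Corollary~\ref{local_good_cor} control the filtration $\{A_n\}$ and, via the closed‑fibre argument, makes $\h\oh_{Y,y}$ noetherian so that Chevalley's theorem applies. The remaining steps are routine manipulations with exactness of $\phi_*$ and the universal property of good moduli spaces.
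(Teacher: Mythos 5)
Your overall framework is the same as the paper's: identify $\Gamma(\cX_n,\oh_{\cX_n})$ with $\oh_{Y,y}/(\cK_n)_y$ where $\cK_n=\phi_*(\cI^{n+1})$ using exactness of $\phi_*$, observe $(\cK_n)_y\supseteq \fm_y^{k(n)}$, and compare with the $\fm_y$-adic filtration; those steps, and your Krull-intersection-plus-separation argument showing $\bigcap_n(\cK_n)_y=0$ in $\oh_{Y,y}$, are fine. The gap is exactly at the point you flag as the crux. Injectivity of $\theta$ is the statement $\bigcap_n(\cK_n)_y\h\oh_{Y,y}=0$, and you deduce it from $\bigcap_n(\cK_n)_y=0$ ``since each $(\cK_n)_y$ contains a power of $\fm_y$ this persists after completion.'' That implication is false for a general descending chain of open ideals in a noetherian local ring: in $R=k[x,y]_{(x,y)}$ with $\h R=k[[x,y]]$, take $f\in x\,k[[x]]$ transcendental over $k(x)$ and set $K_n=\bigl((y-f)\h R+\h\fm^{\,n}\bigr)\cap R$. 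Each $K_n$ contains $\fm^n$, and $\bigcap_n K_n=\bigl((y-f)\h R\bigr)\cap R=0$, yet $y-f\in K_n\h R$ for every $n$ (because $y-f_{<n}\in K_n$, where $f_{<n}$ is the truncation of $f$, and $f-f_{<n}\in\h\fm^{\,n}$), so $\bigcap_n K_n\h R\neq 0$. Chevalley's theorem only becomes available after the intersection is known to vanish in the \emph{complete} ring, so your argument never establishes the cofinality it needs; some special feature of the ideals $\phi_*(\cI^{n+1})$ beyond $\bigcap_n(\cK_n)_y=0$ must enter. (Your parenthetical successive-approximation variant has the same missing ingredient.)

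The paper avoids this by proving the cofinality of the two filtrations directly: one always has $(\phi_*\cI)^n\subseteq\phi_*(\cI^n)$, and conversely, since the thickenings of $y$ defined by powers of $\phi_*\cI$ are Artinian, the descending chain formed by the images of the $\phi_*(\cI^m)$ terminates, yielding for each $n$ an $N$ with $\phi_*(\cI^N)\subseteq(\phi_*\cI)^n$; once the filtrations are intertwined (together with $\cJ^k\subseteq\phi_*\cI\subseteq\cJ$ from Artinianness of $\Gamma(\cZ,\oh_\cZ)$), both inverse limits compute $\h\oh_{Y,y}$ and no completion subtlety arises. To repair your route you would either need such a containment (an Artin--Rees-type statement for $\phi_*$ of powers of $\cI$), or else rerun your Krull/support argument after base change along the flat map $\Spec\h\oh_{Y,y}\arr Y$, so that the intersection is shown to vanish in the completion itself rather than in $\oh_{Y,y}$. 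A minor secondary point: your opening reduction asserts $\cZ\red=\cG_\xi$, but a reduced, cohomologically affine closed substack with Artinian global sections need not be a gerbe (e.g.\ $[\AA^1/\GG_m]$); this reduction is inessential to your argument, but as stated it is not justified.
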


\begin{proof}  We have that $\phi_* \cI \subseteq \cJ$ and $\ilim \Gamma(Y, \oh_Y/ (\phi_* \cI)^n) \arr \h{\oh}_{Y,y}$ is an isomorphism.  We also have the identification $\ilim \Gamma(\cX_n, \oh_{\cX_n}) = \ilim \Gamma(Y, \phi_* \cI^n)$.  There is an inclusion $(\phi_*\cI)^n \subseteq \phi_* (\cI^n)$.  Since $Y_n$ is Artinian, the descending chain of sheaves of ideals $\phi_*(\cI^n) \supseteq \phi_*(\cI^{n+1}) \supseteq \cdots$ in $Y_n$ terminates so that for all $n$, there exists a $N$ such that $\phi_*(\cI^N) \subseteq (\phi_* \cI)^n$.
\end{proof}

\subsection*{Local structure around closed points with linearly reductive stabilizer} 
We apply the results above to the case in which we are most interested in:   $\cX$ is a noetherian Artin stack and $\xi \in |\cX|$ is a closed point with linearly reductive stabilizer.  Let $\cG_{\xi}$ be the residual gerbe of $\xi$ (see \cite[Section 11]{lmb}).
There is a closed immersion $\cG_{\xi} \hookarr \cX$ which, as in (\ref{subsection-setup}), induces a smooth, adic formal $S$-groupoid $\fR \rrarrows \fU$.

Since $\xi \in |\cX|$ has linearly reductive stabilizer (see \cite[Definition 12.12]{alper_good_arxiv}), $\cG_{\xi}$ is cohomologically affine and $\phi_0: \cG_{\xi} \arr \Spec k(\xi)$ is a good moduli space.  The nilpotent thickenings also admit good moduli spaces $\phi_n: \cX_n \arr Y_n$ and there is an induced morphism $\fq: \fU \arr \fY$.

\begin{cor} \label{local_point_cor} Suppose $\xi \in |\cX|$ is a closed point with linearly reductive stabilizer.  Then with the above notation, there is an induced morphism $\fq: \fU \arr \fY$ satisfying the properties {\rm(i)} through {\rm (x)}
in Theorem $\ref{formal_good_thm}$. \epf
\end{cor}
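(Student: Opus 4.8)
The plan is to obtain this as an immediate specialization of Corollary \ref{local_good_cor}, taking the closed substack $\cZ$ there to be the residual gerbe $\cG_\xi$. The smooth, adic $S$-groupoid $\fR \rrarrows \fU$, the nilpotent thickenings $\cX_n$, the good moduli spaces $\phi_n : \cX_n \arr Y_n$, the formal scheme $\fY = \dlim Y_i$ and the morphism $q : \fU \arr \fY$ described in (\ref{local_point}) are exactly the data produced by the construction of (\ref{local_substack}) applied to $\cZ = \cG_\xi$. Hence it suffices to verify that the hypotheses of Corollary \ref{local_good_cor} hold for this choice of $\cZ$: that $\cG_\xi$ is a closed, cohomologically affine substack of the noetherian Artin stack $\cX$ with $\Gamma(\cG_\xi, \oh_{\cG_\xi})$ Artinian.

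First, $\cG_\xi \hookarr \cX$ is a closed immersion: the residual gerbe at $\xi$ is a (reduced) locally closed substack of the noetherian stack $\cX$, and it is closed precisely because $\xi \in |\cX|$ is assumed to be a closed point — this is already recorded in (\ref{local_point}). Second, the cohomological affineness of $\cG_\xi$ is, by \cite[Definition 12.12]{alper_good}, exactly what it means for $\xi$ to have linearly reductive stabilizer; equivalently, as noted in (\ref{local_point}), $\phi_0 : \cG_\xi \arr \Spec k(\xi)$ is a good moduli space. Third, using that $\phi_0$ is a good moduli space, property (ii) of Definition \ref{defn_good} gives $\Gamma(\cG_\xi, \oh_{\cG_\xi}) \cong \Gamma(\Spec k(\xi), \phi_{0*}\oh_{\cG_\xi}) \cong k(\xi)$, a field, which is Artinian; in particular $\dim Y_0 = \dim \Spec k(\xi) = 0$, so the extra hypothesis required for properties (viii)--(x) of Theorem \ref{formal_good_thm} is also in force.

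With these three points established, Corollary \ref{local_good_cor} applies verbatim and yields that $q : \fU \arr \fY$ satisfies properties (i)--(x) of Theorem \ref{formal_good_thm}, which is the assertion. There is no genuine obstacle here beyond matching up notation; the only point worth flagging is that the Artinian hypothesis of Corollary \ref{local_good_cor} is automatic in the present situation because the good moduli space of the residual gerbe is simply $\Spec k(\xi)$, and it is this vanishing of $\dim Y_0$ that makes the stronger conclusions (viii)--(x) available.
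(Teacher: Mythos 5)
Your proposal is correct and matches the paper's treatment: the paper also obtains this corollary as an immediate specialization of Corollary \ref{local_good_cor} to $\cZ = \cG_{\xi}$, using exactly the observations recorded in (\ref{local_point}) that $\cG_\xi \hookarr \cX$ is closed since $\xi$ is a closed point, that linear reductivity of the stabilizer makes $\cG_\xi$ cohomologically affine with good moduli space $\Spec k(\xi)$, and hence that $Y_0 = \Spec k(\xi)$ is Artinian so properties (viii)--(x) apply as well. Your verification of the Artinian hypothesis via $\Gamma(\cG_\xi,\oh_{\cG_\xi}) \cong k(\xi)$ is the same point the paper leaves implicit, so there is nothing to add.
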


In particular, Corollary \ref{local_point_cor} implies that there is an isomorphism of topological rings $\ilim \Gamma(\cX_n, \oh_{\cX_n}) \arr (\ilim \Gamma(U_n, \oh_{U_n}))^{\fR}.$  There may not exist a good moduli space for $\cX$ but Theorem \ref{theorem-formal-local} establishes that we do in fact know the local structure of the good moduli space if it exists.  

\medskip \noindent
{\it Proof of Theorem $\ref{theorem-formal-local}$. }  Note that the stabilizer $G_x$ is linearly reductive since $x \in |\cX|$ is a closed point. The existence of quotient stack presentations $\cX_i \cong [\Spec A_i / G_x]$ follows from \cite[Theorem 1]{alper_quotient}.  The theorem then follows from Proposition \ref{proposition-formal} and Corollary \ref{local_point_cor}. \epf

\begin{remark}  With the notation of Theorem \ref{theorem-formal-local}, if $x \in \cX(k)$ is not a closed point, then not much can be said about the local structure of $Y$ around $\phi(x)$; even the dimensions of the good moduli spaces may vary as one varies open substacks containing $x$.  For instance, consider $\GG_m \times \GG_m$ acting on $\AA^4$ via $(t,s) \cdot (w,x,y,z) = (tw, tx, sy, sz)$.  Let $\cX =  [\AA^4 / \GG_m \times \GG_m]$ and  $x = (1,1,1,1) \in \cX$.  Let $\cU$ be the open locus where $(w,x) \ne (0,0)$ and $\cV \subseteq \cU$ be the sub-locus where $(y,z) \ne (0,0)$.  Then we have a commutative diagram of good moduli spaces of open substacks containing $x$
$$\xymatrix{
\cV \ar[d] \ar[r]				& \cU \ar[d] \ar[r]	& \cX \ar[d] \\
\PP^1 \times \PP^1 \ar[r]	 	& \PP^1 \ar[r] 	 	& \Spec k.
}$$
\end{remark}

\subsection*{The theorem on formal functions}
Let $\cX$ be a noetherian Artin stack and let $\phi: \cX \arr Y$ be a good moduli space.  Let $Y_0 \subseteq Y$ be a closed subscheme defined by a sheaf of ideals $\cJ$ and set $\cX_0:=\phi^{-1}(Y') \subseteq |\cX|$ which is defined by $\cI: = \phi^* \cJ \cdot \oh_{\cX}$.  Let $Y_k$ be the $k$-th nilpotent thickening of $Y_0$ defined by $\cJ^{k+1}$ and $\cX_k = \cX \times_Y Y_k$ the $k$-th nilpotent thickening of $\cX_0$.  

If $\cF$ is a coherent sheaf of $\oh_{\cX}$-modules, set $\cF_k = \cF / \cI^{k+1} \cF$.  For a coherent sheaf $\cG$ of $\oh_{Y}$-modules, let $\hat{\cG} := \ilim \cG / \cJ^{k+1} \cG$

\begin{remark} \label{exactness_remark}
 If we let $U \arr \cX$ be a presentation, then as above there is an induced smooth, adic formal $S$-groupoid $\fR \rrarrows \fU$.  Let $\fY = \dlim Y_n$ and let $\fq: \fU \arr \fY$ be the induced morphism.  A coherent $\oh_{\cX}$-module $\cF$ induces a coherent $\oh_{\fU}$-module $\fF$ with an $\fR$-action.  As we saw in Theorem \ref{formal_good_thm}, there is an isomorphism of topological $\oh_{\fY}$-modules
$$(\fq_*\fF)^{\fR} \arr \ilim \phi_* \cF_k.$$
Since the functor $\cF \mapsto \fF$ is exact and by Theorem \ref{formal_good_thm} the functor $\fF \mapsto (\fq_* \fF)^{\fR}$ is exact, it follows that the functor $\cF \mapsto \ilim \phi_* \cF_k$ is exact.
\end{remark}

\begin{thm} \label{fns_thm} Let $\cX$ be a noetherian Artin stack, $\phi: \cX \arr Y$ a good moduli space and $Y_0 \subseteq Y$ a closed sub-algebraic space.  If $\cF$ is a coherent $\oh_{\cX}$-module, for each $n \ge 0$, the natural morphism
$$\hat{R^n \phi_* (\cF)} \arr  \ilim R^n \phi_* (\cF_k)$$
is an isomorphism.
\end{thm}

\begin{proof}  We may assume $Y$ is a scheme.  Because $\phi_*$ is exact, the case of positive $n$ is obvious and we must only show that 
$$\hat{\phi_* \cF} \arr \ilim \phi_* \cF_k$$
is an isomorphism.  Define $\cK$ and $\cL$ by the exact sequence
$$0 \arr \cK \arr \phi^* \phi_* \cF \arr \cF \arr \cL \arr 0.$$
Since $\phi_*$ and completion are exact functors and, by the above remark, $\cF \mapsto \ilim \phi_* \cF_k$ is exact, we have a commutative diagram
$$\xymatrix{
0 \ar[r]	& \hat{\phi_* \cK} \ar[r] \ar[d]	& \hat{\phi_* \phi^* \phi_* \cF} \ar[r] \ar[d]	& \hat{\phi_* \cF} \ar[r] \ar[d] & \hat{\phi_* \cL} \ar[r] \ar[d]		& 0\\
0 \ar[r]	& \ilim \phi_* \cK_k \ar[r] & \ilim \phi_* (\phi^* \phi_* \cF)_k \ar[r] &\ilim \phi_* \cF_k \ar[r]&\ilim \phi_* \cL_k \ar[r]& 0&
}$$
with both rows exact.  We note that $\phi_* \cK = \phi_* \cL = 0$ and since $\phi_* \cK \mapsonto \phi_* \cK_k$ and $\phi_* \cL \mapsonto \phi_* \cL_k$ are surjective, it follows that $\ilim \phi_* \cK_k = \ilim \phi_* \cL_k = 0$.  Therefore, it suffices to prove the theorem in the case that $\cF=\phi^* \cG$ is the pullback of a coherent sheaf $\cG$ on $Y$.  In this case, $\hat{ \phi_* \cF} = \hat{\cG}$ and $\phi_* \cF_k = \cG / \cI^{k+1} \cG$, and the statement is clear.
\end{proof}

By applying the above theorem when $Y_0$ is a point and $Y$ is affine, we obtain the following corollary.
\begin{cor}  
 Let $\cX$ be a noetherian Artin stack, $\phi: \cX \arr Y$ a good moduli space with $Y$ affine and $y \in Y$ a point.  If $\cF$ is a coherent $\oh_{\cX}$-module, for each $n \ge 0$, the natural morphism
$$\hat{H^n(\cX, \cF)} \arr  \ilim H^n(\cX_k, \cF_k)$$
is an isomorphism. \epf
\end{cor}

\section{Geometric Invariant Theory for Formal Schemes} \label{formal_git_section}

In this section, we show that the constructions of geometric invariant theory carry over for actions of linearly reductive group schemes on formal affine schemes.

\subsection*{Setup} 
Let $G$ be a linear reductive affine group scheme over a locally noetherian scheme $S$.  Recall from \cite[Section 12]{alper_good_arxiv} that this means that $G \arr S$ is flat, finite type, and affine and the morphism $BG \arr S$ is cohomologically affine.  If $\fX$ is a locally noetherian formal scheme over $S$, an action of $G$ on $\fX$ consists of a morphism $\sigma: G \times_S \fX \arr \fX$ such that the usual diagrams commute.  Let $\fI$ be the largest ideal of definition (see \cite[0.7.1.6]{ega}).  Note that both the projection and multiplication $p_2, \sigma: G \times_S \fX \arr \fX$ are adic morphisms, and that $\fI$ is $G$-invariant.

If we denote $X_n =  (\fX, \oh_{\fX}/\fI^{n+1})$ as the closed subscheme defined by $\fI^{n+1}$, there are induced compatible actions of $G$ on $X_n$.  Conversely, given compatible actions of $G$ on the $X_n$, there is a unique action of $G$ on $\fX$ restricting to the actions on $X_n$.

Suppose further that $\fX = \Spf B$, $S = \Spec C$ with $B$ is an $I$-adic $C$-algebra and $G$ is an affine fppf linearly reductive group scheme over $S$.  The action of $G$ on $\fX$ translates into a dual action $\sigma^{\#}: B \arr \Gamma(G) \htensor_C B$ with $\sigma^{\#}(I) \subseteq \Gamma(G) \htensor I$.    The action corresponds to a compatible family of dual actions $\sigma_n^{\#}: B/I^n \arr \Gamma(G) \tensor_C B/I^n$.  Define
$$\xymatrix{
B^G =\Eq( B   \ar@<.5ex>[r]^{\sigma^{\#}} \ar@<-.5ex>[r]_{p_2^{\#}}  & \Gamma(G) \htensor_C B).}$$

Then $\sigma, p_2: G \times_S \fX \rrarrows \fX$ is a smooth, adic formal $S$-groupoid where the identity, inverse and composition morphisms and the commutativity of the appropriate diagrams are induced formally from the group action.  

The quotient stacks $\cX_n = [X_n /G]$ are cohomologically affine and therefore admit good moduli spaces $\phi_n: \cX_n \arr Y_n$ where $Y_n = \Spec (B/I^n)^G$.  Let $\fY = \dlim Y_i$ and $\fq: \fX \arr \fY$ be the induced morphism.  The observations in \ref{groupoid_start} through \ref{groupoid_end} have obvious analogues to the case of group actions.

Theorems \ref{formal_good_thm} translates into the following theorem.

\begin{thm} \label{fgit_thm}
Assume the above notation.
\begin{enumeratei}
\item \label{fgit_functions}
	The natural map $\oh_{\fY} \arr (\fq_* \oh_{\fX})^{G}$ is an isomorphism of sheaves of topological rings.
\item \label{fgit_exact}
	The functor from coherent sheaves on $\fX$ with $G$-actions to sheaves on $\fY$ given by $\fF \mapsto (\fq_* \fF)^G$ is exact.  Furthermore, the morphism $(\fq_*\fF)^G \arr \ilim (\phi_i)_* \cF_i$ is an isomorphism of topological $\oh_{\fY}$-modules.

\item \label {fgit_sur}
	$\fq$ is surjective.

\item \label{fgit_closed}
	If $\fZ \subseteq \fX$ is a closed, $G$-invariant formal subscheme, then $\fq(\fZ)$ is closed.  

\item \label{fgit_set_sep}
If $\fZ_1, \fZ_2 \subseteq \fX$ are closed, $G$-invariant formal subschemes, then set-theoretically 
	$$\fq(\fZ_1) \cap \fq(\fZ_2) = \fq(\fZ_1 \cap \fZ_2).$$

\item \label{fgit_univ}	
	$\fq$ is universal for $G$-invariant maps to formal schemes.

\item \label{fgit_noeth}
If $\fY= \Spf A$ is an affine formal scheme, then $A$ is noetherian.

\end{enumeratei}
\bigskip
Suppose furthermore that $\dim Y_0 = 0$ (i.e., $Y_0$ is an Artinian scheme).
\bigskip
\begin{enumeratei} \setcounter{enumi}{7}

\item \label{fgit_adic}
$\fY$ is a locally noetherian formal scheme.  In particular, if $\fY = \Spf A$ and $m = \ker(A \arr A_0)$, then $A$ is an $m$-adic noetherian ring.

\item \label{fgit_coherence}
If $\fF$ is a coherent sheaf of $\fX$ with $\fR$-action, then $(\fq_* \fF)^G$ is a coherent $\fY$-module.

\item \label{fgit_sep}
	If $\fI$ and $\fJ$ are two $G$-invariant coherent ideals in $\oh_{\fX}$, then the natural map
	$$ (\fq_* \fI)^{G} + (\fq_* \fJ)^{G} \arr (\fq_* (\fI + \fJ))^{G}$$
is an isomorphism.  
\end{enumeratei}
\epf
\end{thm}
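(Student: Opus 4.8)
The plan is to recognize that the data assembled in \ref{fgit_setup} is an instance of that of \ref{formal_notation}, so that Theorem \ref{fgit_thm} is just the translation of Theorem \ref{formal_good_thm} along a dictionary between ``$\fR$-equivariant'' and ``$G$-equivariant'' notions. Concretely I would take $\fU = \fX$ and $\fR = G \times_S \fX$, with source $p_2$, target $\sigma$, and identity, inverse, and composition induced from the group law of $G$; as already observed in \ref{fgit_setup} this is a smooth, adic $S$-groupoid of locally noetherian formal schemes (adicness of $p_2$ and $\sigma$ following since the largest ideal of definition $\fI$ is $G$-invariant and $G \arr S$ is of finite type). Choosing $\fI$ as the invariant ideal of definition, the induced thickenings are $U_n = X_n$ and $R_n = G \times_S X_n$, so that $[U_n/R_n] = [X_n/G] = \cX_n$.

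The next step is to check that the good moduli spaces attached to the two setups coincide. Since $X_n$ is affine over $S$ and $G$ is linearly reductive, $\cX_n = [X_n/G]$ is cohomologically affine --- either by \cite[Theorem 13.2]{alper_good} directly, or because cohomological affineness propagates from $\cX_0$ by \cite[Prop 3.9(iv)]{alper_good} --- and its good moduli space is $Y_n$ as defined in \ref{fgit_setup}. Hence $\fY = \dlim Y_i$ together with the induced $q : \fX \arr \fY$ is exactly the formal scheme and morphism to which Theorem \ref{formal_good_thm} applies, and the hypothesis $\dim Y_0 = 0$ means the same thing in both places.

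It then remains to match vocabulary. The equalizer sheaf $(\fq_* \oh_{\fU})^{\fR}$ of \ref{groupoid_start}, formed with $\fs = p_2$ and $\ft = \sigma$, is by construction the sheaf of $G$-invariants $(\fq_*\oh_{\fX})^{G}$; on global sections this recovers $\Gamma(\fX,\oh_\fX)^{\fR} = \Eq\bigl(B \rrarrows \Gamma(G)\htensor_C B\bigr) = B^G$, and likewise $(q_*\fF)^{\fR} = (q_*\fF)^{G}$ for any coherent $\fF$ carrying an equivariant structure. Similarly, an $\fR$-invariant coherent sheaf of ideals is precisely a $G$-invariant one, an $\fR$-invariant closed formal subscheme of $\fX$ is a $G$-invariant closed formal subscheme, and an $\fR$-invariant map to a formal scheme is exactly a $G$-invariant map. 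Under these identifications, statements (\ref{fgit_functions})--(\ref{fgit_sep}) are precisely statements (\ref{formal_functions})--(\ref{formal_sep}) of Theorem \ref{formal_good_thm}; and since $\fY$ is affine (a colimit of affine thickenings), (\ref{fgit_noeth}) and (\ref{fgit_functions}) additionally identify $\fY$ with $\Spf B^G$, with $B^G$ noetherian.

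Since every assertion is obtained by transporting Theorem \ref{formal_good_thm} along this dictionary, there is nothing genuinely new to prove. The only point needing attention is the bookkeeping of the first two paragraphs, namely verifying that the $G$-action really does satisfy all the running hypotheses of Section \ref{formal_good_sec} and that the objects $\fY$ and $q$ built from the action agree with those built from the abstract groupoid $\fR \rrarrows \fU$. I expect this to be the main (and only) obstacle, and it is routine.
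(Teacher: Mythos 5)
Your proposal is correct and is exactly the paper's (implicit) argument: the paper offers no separate proof, stating only that Theorem \ref{formal_good_thm} ``translates into'' Theorem \ref{fgit_thm} via the smooth, adic groupoid $p_2, \sigma: G \times_S \fX \rrarrows \fX$ already set up in \ref{fgit_setup}, with the $\fR$-equivariant notions specializing to the $G$-equivariant ones. Your careful bookkeeping (thickenings $U_n = X_n$, $R_n = G \times_S X_n$, $[U_n/R_n] = \cX_n$, identification of invariants and of $\fY = \Spf B^G$) just makes explicit what the paper leaves to the reader.
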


\begin{remark}  The formal analogue of Nagata's fundamental lemma for linear reductive group actions (\cite{nagata_invariants-affine}) hold:  if $G$ is a linearly reductive group acting a noetherian affine formal scheme $\Spf A$, then
\begin{enumeratei}
\item for an invariant ideal $J \subseteq A$,
$$A^G / (J \cap A^G) \iso (A/J)^G,$$
\item for invariant ideal $J_1, J_2 \subseteq A$,
$$J_1 \cap A^G + J_2 \cap A^G \iso (J_1+J_2) \cap A^G.$$
\end{enumeratei}
\end{remark}

\section{\'Etale local construction of good moduli spaces} \label{etale_construction}

\subsection*{Recalling properties of good moduli spaces}
We recall the necessary results from \cite{alper_quotient} which generalize analogous results from \cite{alper_good_arxiv}.

\begin{prop} \label{etale_preserving_cor}  (\cite[Corollary 6.6]{alper_quotient}) 
Consider a commutative diagram $$\xymatrix{ 
\cX \ar[r]^f \ar[d]^{\phi}		& \cX' \ar[d]^{\phi'} \\
Y \ar[r]^g					& Y'
}$$
with $\cX, \cX'$ locally noetherian Artin stacks of finite type over $S$, $g$ locally of finite type, and $\phi, \phi'$ good moduli spaces.  If $f$ is \'etale, pointwise stabilizer preserving and weakly saturated, then $g$ is \'etale.
\end{prop}

\begin{prop}   \label{finite_prop} (\cite[Proposition 6.7]{alper_quotient})
Suppose $\cX, \cX'$ are locally noetherian Artin stacks and
$$\xymatrix{
\cX \ar[r]^f \ar[d]^{\phi}		& \cX' \ar[d]^{\phi'} \\
Y \ar[r]^{g}					& Y'
}$$
is commutative with $\phi,\phi'$ good moduli spaces.  Suppose
\begin{enumeratea}
\item $f$ is representable, quasi-finite and separated,
\item $g$ is finite,
\item $f$ is weakly saturated.
\end{enumeratea}
Then $f$ is finite. 
\end{prop}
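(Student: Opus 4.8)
The plan is to use the valuative-type criterion for properness, adapted to this relative setting, to upgrade "quasi-finite and separated" to "finite." Since $f$ is already representable, quasi-finite, and separated, by Zariski's Main Theorem it suffices to show $f$ is universally closed (equivalently, proper); then a proper quasi-finite morphism is finite. Because $g$ is finite and $\phi, \phi'$ are universally closed (good moduli spaces are universally closed), the composite $\phi \circ f = g \circ \phi'$ is universally closed. This is not quite enough to conclude $f$ is universally closed on its own, so the real work is to relate closedness for $f$ to closedness downstairs.

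First I would reduce to checking that $f$ is closed after arbitrary base change $Y'' \to Y'$ with $Y''$ affine and, in fact, to checking closedness of $f$ itself (universality will follow because all hypotheses are stable under base change: $g$ finite, $\phi,\phi'$ good moduli spaces, and weak saturation is preserved here since everything in sight is finite type over the relevant base). So let $\cZ \subseteq \cX$ be a closed substack; I must show $f(\cZ) \subseteq \cX'$ is closed. The key point is the interaction between $\phi'$ and weak saturation. Since $\phi'$ is universally closed and surjective, $\phi'(\cZ) \subseteq Y$ is closed; since $g$ is finite, $g(\phi'(\cZ)) \subseteq Y'$ is closed; and since $\phi$ is a good moduli space, $\phi^{-1}(g(\phi'(\cZ)))$ is a closed substack of $\cX'$ containing $f(\cZ)$. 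It remains to check that $f(\cZ)$ is exactly $\phi^{-1}(g \phi'(\cZ)) \cap (\text{something cut out by } f)$ — more precisely, I would argue that a point $\xi' \in |\cX'|$ lies in the closure of $f(\cZ)$ only if it already lies in $f(\cZ)$, using that over each fiber the map on closed points is controlled. Concretely, pass to geometric fibers over $S$ (here the "weakly saturated" hypothesis is exactly what lets us say closed points of $\cX_s$ go to closed points of $\cX'_s$), reduce to the case $S = \Spec k$, and then use that $\phi'$ contracts each fiber of $f(\cZ) \to \phi(f(\cZ))$ to show the image is closed: a point in $\overline{f(\cZ)}$ specializes to a closed point of some fiber, that closed point is $f$ of a closed point of $\cX$ in $\cZ$ (weak saturation, run in reverse via surjectivity of $\phi'$ onto $\phi'(\cZ)$ and properness), hence lies in $f(\cZ)$, and then one checks $f(\cZ)$ is stable under generization within the fiber it actually meets.

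The main obstacle I anticipate is the last step — showing that the set-theoretic image $f(\cZ)$ is genuinely closed rather than merely constructible with closed image under $\phi$ — and this is precisely where the substitution of "weakly saturated" for the stronger "$f$ maps closed points to closed points" must be justified carefully: one needs that for every geometric point $s \to S$, closed points of $\cX_s \cap \cZ_s$ map to closed points of $\cX'_s$, and then combine this with the universal closedness of $\phi'$ (which lets us detect closedness of $\phi'(\cZ)$ and hence pull information back) and with Theorem~\ref{etale_preserving}-style reasoning about closed points lying over closed points of good moduli spaces. I would expect the proof to follow the structure of \cite[Proposition 6.4]{alper_good}, replacing each invocation of "$f$ sends closed points to closed points" by the fiberwise statement supplied by weak saturation, and checking that no step used the stronger hypothesis on non-closed points. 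Once $f$ is shown to be universally closed, quasi-finite plus separated plus universally closed gives finite, completing the proof.
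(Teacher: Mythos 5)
Your strategy (show $f$ is universally closed, then use quasi-finite $+$ separated $+$ proper $\Rightarrow$ finite) is legitimate in principle, but the two steps you lean on are exactly the ones that fail. First, your reduction from universal closedness to plain closedness of $f$ rests on the claim that ``all hypotheses are stable under base change,'' including weak saturation. The paper explicitly shows this is false: the remark following the definition exhibits \'etale, stabilizer preserving, surjective, weakly saturated open immersions whose base change is not weakly saturated, and this failure is precisely why Theorem \ref{etale_existence_thm} has to assume \emph{universally} weakly saturated. Moreover, even setting that aside, closedness of $f$ alone would not suffice: a representable, quasi-finite, separated morphism can be closed without being finite (e.g.\ $\AA^1 \sqcup \{0\} \arr \AA^1$), so you genuinely need universal closedness, and your proposal gives no way to get it. Second, the heart of your argument --- that the image of a closed substack is closed --- is never actually carried out; you flag it yourself as ``the main obstacle.'' The sketch via specializations is muddled (closedness of a constructible image needs stability under \emph{specialization}, and you invoke ``stable under generization''), and weak saturation only controls images of \emph{closed} points of geometric fibers; it says nothing about why an arbitrary point in $\overline{f(\cZ)} \setminus f(\cZ)$ cannot exist. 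Pushing forward to $Y'$ via $\phi'$ and $g$ only bounds $f(\cZ)$ inside the closed set $\phi^{-1}(g\phi'(\cZ))$, which, as you note, is far from closedness of $f(\cZ)$ itself.

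The paper avoids attacking properness head-on. After base changing to reduce to the case where $g$ is an isomorphism (checking that $\cX \arr Y \times_{Y'} \cX'$ inherits the hypotheses), it applies Zariski's Main Theorem to factor $f$ as an open immersion $I: \cX \arr \cZ$ followed by a finite morphism $f': \cZ \arr \cX'$ with $\oh_{\cZ} \hookrightarrow I_*\oh_{\cX}$. Since $\cX'$ is cohomologically affine and $f'$ is finite, $\cZ$ is cohomologically affine and admits a good moduli space; weak saturation of $f$ makes the open immersion $I$ weakly saturated, i.e.\ $\cX$ is a saturated open substack of $\cZ$ (Remark \ref{saturated_open}), and comparing good moduli spaces forces $I$ to be an isomorphism, so $f = f'$ is finite. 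In other words, ZMT is used to build a finite envelope and the good-moduli-space saturation machinery shows the open piece is everything; no universal closedness of $f$, and no base-change stability of weak saturation, is ever needed. If you want to salvage your approach, you would have to either strengthen the hypothesis to universal weak saturation or find an argument for universal closedness that only uses weak saturation over the original base, and the latter is exactly the gap your sketch does not fill.
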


\begin{prop} \label{square_prop} (\cite[Proposition 6.8]{alper_quotient})
Suppose $\cX, \cX'$ are locally noetherian Artin stacks and 
$$\xymatrix{
\cX \ar[r]^f \ar[d]^{\phi}		& \cX' \ar[d]^{\phi'} \\
Y \ar[r]^{g}					& Y'
}$$
is a commutative diagram with $\phi,\phi'$ good moduli spaces.  If $f$ is representable, separated, \'etale, stabilizer preserving and weakly saturated, then $g$ is \'etale and the diagram is cartesian.
\end{prop}

We prove a simple proposition which concludes that good moduli spaces exist locally near a preimage of a closed point after a quasi-finite, separated base change.

\begin{prop}  Suppose there is a diagram
$$\xymatrix{
\cX \ar[r]^f 	& \cX' \ar[d]^{\phi'} \\
			& Y'
}$$
with $f$ a representable, quasi-finite, separated morphism of locally noetherian Artin stacks and $\phi'$ a good moduli space.  Suppose $\xi \in |\cX|$ has closed image $\xi' \in |\cX'|$.  Then there exists an open substack $\cU \subseteq \cX$ containing $\xi$ and a commutative diagram
$$\xymatrix{
\cU \ar[r]^{f|_{\cU}} \ar[d]^{\phi}		& \cX' \ar[d]^{\phi'} \\
 Y \ar[r]^g							& Y'
 }$$
 with $\phi$ a good moduli space.
 \end{prop}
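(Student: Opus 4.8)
The plan is to factor $f$ via Zariski's Main Theorem and then carve out a \emph{saturated} open neighbourhood of $\xi$ inside the good moduli space of the resulting finite cover, exactly in the spirit of the proof of Proposition~\ref{finite_prop}. Since the assertion is Zariski-local on $\cX$ near $\xi$, I would first replace $Y'$ by an affine open neighbourhood of $\phi'(f(\xi))$, $\cX'$ by its $\phi'$-preimage (which still contains $\xi'=f(\xi)$, with $\xi'$ still closed), and $\cX$ by its $f$-preimage (still containing $\xi$); the hypotheses survive. Applying \cite[Thm.~16.5]{lmb} to the representable, quasi-finite, separated $f$ gives a factorization $\cX\stackrel{I}{\hookarr}\cZ\stackrel{f'}{\arr}\cX'$ with $I$ an open immersion and $f'$ finite. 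As in the proof of Proposition~\ref{finite_prop}, $\cZ$ is cohomologically affine over the affine scheme $Y'$ (composition of the finite $f'$ with the cohomologically affine $\phi'$) and hence admits a good moduli space $\varphi\colon\cZ\arr Z$ with $Z$ affine over $Y'$. Moreover $\xi$ is a \emph{closed} point of $\cZ$: as $\xi'$ is closed in $\cX'$, the substack $f'^{-1}(\xi')$ is closed in $\cZ$, and it is zero-dimensional over $k(\xi')$ since $f'$ is finite, so $\xi\in f'^{-1}(\xi')$ is closed there and therefore in $\cZ$.

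Next I would produce $\cU$. The closed substack $\cZ\setminus\cX$ has closed image $\varphi(\cZ\setminus\cX)$ in $Z$ because good moduli spaces are universally closed, so $W:=Z\setminus\varphi(\cZ\setminus\cX)$ is open; set $\cU:=\varphi^{-1}(W)$, which is open, contained in $\cX$, and saturated for $\varphi$ by construction. The heart of the argument is checking $\varphi(\xi)\in W$. Since $\xi$ is closed and $\varphi$ is closed with $\varphi(\{\xi\})$ a single point, $\varphi(\xi)$ is a closed point of $Z$, so $\varphi^{-1}(\varphi(\xi))$ is closed in $\cZ$; base-changing $\varphi$ along $\Spec k(\varphi(\xi))\arr Z$ shows $\varphi^{-1}(\varphi(\xi))\arr\Spec k(\varphi(\xi))$ is a good moduli space, hence the source is cohomologically affine, quasi-compact and nonempty, and it has a \emph{unique} closed point by the strong geometric property of good moduli spaces (two distinct closed points would be disjoint reduced closed substacks whose scheme-theoretic images, both being the whole point, would still meet). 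That unique closed point must be $\xi$. Now if some $z\in\cZ\setminus\cX$ had $\varphi(z)=\varphi(\xi)$, then $\overline{\{z\}}$ would be a nonempty closed substack of $\varphi^{-1}(\varphi(\xi))$, hence cohomologically affine, hence would contain a point closed in $\varphi^{-1}(\varphi(\xi))$, which by uniqueness is $\xi$; so $\xi\in\overline{\{z\}}$, and since $\cX$ is open this forces $z\in\cX$, a contradiction. Thus $\varphi(\xi)\in W$ and $\xi\in\cU$.

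Finally, $\cU\arr W$ is a good moduli space, being the base change of $\varphi$ along the open immersion $W\hookarr Z$ and good moduli spaces being stable under base change. I would then take $Y:=W$ (an open of the algebraic space $Z$, affine over $Y'$), $\phi:=\varphi|_{\cU}$, and $g\colon Y\hookarr Z\arr Y'$ the composition with the structure morphism of $Z$. The square commutes because $\cZ\arr Z\arr Y'$ agrees with $\cZ\stackrel{f'}{\arr}\cX'\stackrel{\phi'}{\arr}Y'$ (both are the structure morphism of $\cZ$ over $Y'$), restricted to $\cU$. I expect the main obstacle to be the middle paragraph: extracting a saturated open neighbourhood of $\xi$ relies on the fact that a fibre of a good moduli space has a unique closed point and that every point of that fibre specializes to it, which is precisely where the geometric content of good moduli spaces (as opposed to coarse moduli spaces) enters; the remaining steps are a routine assembly of Zariski's Main Theorem and base-change stability.
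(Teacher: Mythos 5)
Your proof is correct and follows essentially the same route as the paper: factor $f$ through Zariski's Main Theorem as an open immersion into a finite cover of $\cX'$, note that $\xi$ becomes a closed point of the cover because $\xi'$ is closed and the cover is finite, form the good moduli space of the cover, delete the closed image of the complement of $\cX$, and take the saturated preimage. One small caveat: your opening reduction to an affine open neighbourhood of $\phi'(f(\xi))$ in $Y'$ is not available in general, since $Y'$ is only a (quasi-separated) algebraic space and its points need not admit affine, or even scheme, Zariski neighbourhoods; but this step is also unnecessary, because the good moduli space of the finite cover $\cZ$ can be produced relatively as $\sSpec_{Y'}(\phi'\circ f')_*\oh_{\cZ}$, the composition $\cZ \arr \cX' \arr Y'$ being cohomologically affine ($f'$ finite, $\phi'$ cohomologically affine, and the diagonal of $Y'$ quasi-affine) --- which is exactly what the paper does. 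Your middle paragraph re-derives, in the case at hand, the separation property recorded in Section~\ref{notation_sec} (disjoint closed substacks have disjoint images, since $\im \cZ_1 \cap \im \cZ_2 = \im(\cZ_1 \cap \cZ_2)$); quoting it directly for the closed point $\xi$ and the closed substack $\cZ \setminus \cX$ shortens the argument, but your unique-closed-point-in-the-fiber reasoning is sound.
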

 
 \begin{proof}
 By applying Zariski's Main Theorem \cite[Theorem 16.5]{lmb}, there is a factorization $f: \cX \stackrel{i}{\arr} \tilde \cX \stackrel{\tilde f}{\arr} \cX'$ with $i$ an open immersion and $\tilde f$ finite.  Therefore, there is a commutative diagram
 $$\xymatrix{
 \cX \ar@{^(->}[r]^i	& \tilde \cX\ar[d]^{\tilde \phi} \ar[r]^{\tilde f}	& \cX' \ar[d]^{\phi'} \\
 			& \tilde Y \ar[r]^{\tilde g}					& Y'
 }$$
 with $\tilde \phi: \tilde \cX \arr \tilde Y := \sSpec \phi'_* \tilde f_* \oh_{\tilde \cX}$ and $\tilde g$ is finite.  Since $\tilde f$ is finite, $\xi \in \tilde \cX$ is closed.  Therefore, $\{\xi\}$ and $\cZ:=\tilde \cX \setminus \cX$ are disjoint, closed substacks so $\tilde \phi(\xi)$ and $\tilde \phi(\cZ)$ are closed and disjoint.  If $Y = \tilde Y \setminus \tilde \phi(\cZ)$, then $\cU = \tilde \phi^{-1}(Y)$ is an open substack containing $\xi$ and contained in $\cX$ admitting a good moduli space $\cU \arr Y$. 
 \end{proof}

 We can also prove that good moduli spaces satisfy effective descent for separated, \'etale, pointwise stabilizer preserving, and weakly saturated morphisms.  A version of the following proposition allows one to conclude that good moduli spaces for locally noetherian Artin stacks are universal for maps to algebraic spaces (see \cite[Theorem 6.6]{alper_good_arxiv}).
 
 \begin{prop} \label{eff_descent_prop}
Suppose $\phi': \cX' \arr Y'$ is a good moduli space and $f: \cX \arr \cX'$ is a surjective, separated, \'etale, pointwise stabilizer preserving, and weakly saturated morphism of locally noetherian Artin stacks.  Then there exists a good moduli space $\phi: \cX \arr Y$ inducing $g: Y \arr Y'$ such that the diagram
$$\xymatrix{
\cX \ar[r]^f \ar[d]^{\phi}		& \cX' \ar[d]^{\phi'} \\
 Y \ar[r]^g							& Y'
 }$$
 is cartesian.
 \end{prop}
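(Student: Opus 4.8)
The plan is to construct the good moduli space of $\cX$ Zariski-locally on $Y'$ and glue. Since $f$ is pointwise stabilizer preserving it is representable, so the whole content is to realize $f$ as the pullback along $\phi'$ of an \'etale morphism $g\colon Y \arr Y'$ of algebraic spaces: once we produce such a $g$ together with an isomorphism $\cX \cong Y \times_{Y'} \cX'$ over $\cX'$, the projection $\phi\colon \cX \arr Y$ is automatically a good moduli space (good moduli spaces are stable under base change along $g$) and the square in the statement is cartesian. The gluing we need is harmless, and so we sidestep the general difficulty mentioned in the introduction, precisely because each local piece will be canonically the good moduli space of an open substack of $\cX$, and good moduli spaces commute with restriction to saturated opens; the local identifications on overlaps therefore involve no choices.

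First I would fix a closed point $y' \in Y'$ and let $\xi' \in |\cX'|$ be the unique closed point of the fibre $\phi'^{-1}(y')$. Since $f$ is surjective, quasi-finite and separated, the preimage $f^{-1}(\xi')$ is a finite set of points $\xi_1, \dots, \xi_r$. Applying the preceding proposition to each $\xi_j$ (using that its image $\xi'$ is closed, and that $f$ is quasi-finite and separated) yields an open substack $\cU_j \subseteq \cX$ containing $\xi_j$, a good moduli space $\phi_j\colon \cU_j \arr Y_j$, and a morphism $g_j\colon Y_j \arr Y'$. The unique closed point of $\cU_j$ in the fibre $\phi_j^{-1}(\phi_j(\xi_j))$ lies in $\overline{\{\xi_j\}}$, hence maps into $\overline{\{\xi'\}} = \{\xi'\}$; being closed in $\cU_j$ with closed image $\xi'$, it lets us invoke Theorem~\ref{etale_preserving} (where $f$ is \'etale and, being pointwise stabilizer preserving, stabilizer preserving at that point) to conclude that $g_j$ is \'etale near $\phi_j(\xi_j)$. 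After shrinking $Y_j$ I may assume $g_j\colon Y_j \arr Y'$ is \'etale.

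The crux is to assemble these slices over a single saturated neighborhood of $\xi'$. Choosing an open $W' \subseteq Y'$ containing $y'$ and setting $\cW' := \phi'^{-1}(W')$, a saturated open of $\cX'$ containing $\xi'$, I would shrink $W'$ so that $f^{-1}(\cW') \subseteq \bigcup_j \cU_j$ and, using that $f$ is separated and \'etale while $f^{-1}(\xi') = \{\xi_1, \dots, \xi_r\}$ is discrete, so that $f^{-1}(\cW')$ splits as a disjoint union $\bigsqcup_j \cV_j$ with $\cV_j \subseteq \cU_j$ a saturated open containing $\xi_j$. I expect this shrinking to be the \textbf{main obstacle}: one must push the closed locus $\cX \setminus \bigcup_j \cU_j$ away from the fibre over $y'$, which I would accomplish by combining universal closedness of $\phi'$ with weak saturation of $f$ to control the images of closed points, exactly in the spirit of Luna's slice argument. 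Granting this, each $\cV_j$ has good moduli space an open $Z_j \subseteq Y_j$ \'etale over $W'$, so $f^{-1}(\cW') = \bigsqcup_j \cV_j$ acquires the good moduli space $Y_{W'} := \coprod_j Z_j \arr W'$, which is \'etale, and Proposition~\ref{square_prop}(ii) shows the square relating $f^{-1}(\cW') \arr Y_{W'}$ to $\cW' \arr W'$ is cartesian, giving $f^{-1}(\cW') \cong Y_{W'} \times_{W'} \cW' \cong Y_{W'} \times_{Y'} \cX'$.

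Finally I would globalize. As $y'$ ranges over the closed points of $Y'$ the opens $W'$ cover $Y'$ (by surjectivity of $f$ and the existence of saturated neighborhoods of each $\xi'$ coming from universal closedness of $\phi'$). Each $Y_{W'}$ is canonically the good moduli space of the open substack $f^{-1}(\phi'^{-1}(W')) \subseteq \cX$, so over an overlap $W' \cap W''$ both restrict to the good moduli space of $f^{-1}(\phi'^{-1}(W' \cap W''))$, and the resulting identifications satisfy the cocycle condition tautologically. Hence the $Y_{W'}$ glue to an algebraic space $g\colon Y \arr Y'$, which is \'etale, and the local cartesian isomorphisms glue to an isomorphism $\cX \cong Y \times_{Y'} \cX'$. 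Then $\phi\colon \cX \arr Y$ is a good moduli space and the square is cartesian, as required.
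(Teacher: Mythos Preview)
Your route is genuinely different from the paper's, and the step you flag as the ``main obstacle'' is where the argument is incomplete.

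The paper's proof is a single global application of Zariski's Main Theorem: factor $f$ as an open immersion $i\colon \cX \hookrightarrow \tilde\cX$ followed by a finite morphism $\tilde f\colon \tilde\cX \arr \cX'$. Since $\tilde f$ is finite and $\cX'$ is cohomologically affine over $Y'$, the stack $\tilde\cX$ already has a good moduli space $\tilde\phi\colon \tilde\cX \arr \tilde Y$. Weak saturation of $f$ forces $i$ to be weakly saturated (a closed point of $\cX_s$ has closed image in $\cX'_s$, and its preimage under the finite map $\tilde f_s$ is a finite closed set, so the point is closed in $\tilde\cX_s$); by Remark~\ref{saturated_open} this means $\cX$ is a saturated open in $\tilde\cX$, so $\phi := \tilde\phi|_{\cX}$ is a good moduli space. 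Corollary~\ref{etale_preserving_cor} then gives that $g$ is \'etale, and Proposition~\ref{square_prop}(ii) gives cartesianness. No local work, no gluing.

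Your local-plus-gluing strategy could in principle succeed, but the shrinking step is not justified by the sketch you give. You need an open $W' \ni y'$ with $(\phi' \circ f)^{-1}(W') \cap \cZ = \emptyset$, where $\cZ = \cX \setminus \bigcup_j \cU_j$. Universal closedness of $\phi'$ lets you push forward closed substacks of $\cX'$, but $f(\cZ)$ need not be closed in $\cX'$ since $f$ is only quasi-finite, so ``combining universal closedness of $\phi'$ with weak saturation of $f$'' is not enough as stated. You also have not arranged the $\cU_j$ to be pairwise disjoint (the preceding proposition gives no control over their overlaps), so the disjoint-union splitting $f^{-1}(\cW') = \bigsqcup_j \cV_j$ is not yet available either. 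Both issues are most cleanly resolved by passing to the ZMT compactification $\tilde\cX$ globally and using saturatedness there --- at which point your argument collapses into the paper's and the local patching becomes unnecessary.
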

 
 \begin{proof}
 By applying Zariski's Main Theorem, there is a factorization $f: \cX \stackrel{i}{\arr} \tilde \cX \stackrel{\tilde f}{\arr} \cX'$ with $i$ an open immersion and $\tilde f$ finite.
 
 Since $f$ is weakly saturated, it follows that $\cX \subseteq \tilde \cX$ is a saturated open substack.  Therefore, there exists a good moduli space $\phi: \cX \arr Y$ inducing a commutative diagram
 $$\xymatrix{
\cX \ar[r]^f \ar[d]^{\phi}		& \cX' \ar[d]^{\phi'} \\
 Y \ar[r]^g							& Y'
 }$$
with $g$ locally of finite type.  Since $f$ is \'etale, pointwise stabilizer preserving and weakly saturated, it follows from Proposition \ref{square_prop} that $g$ is \'etale and that the diagram is cartesian.
 \end{proof}

 \subsection*{\'Etale local existence}
 Theorem \ref{etale_existence_thm} allows us to deduce the existence of a good moduli space for $\cX$ \'etale locally on $\cX$:

\medskip \noindent
{\it Proof of Theorem $\ref{etale_existence_thm}$. }
Let $\cX_2 = \cX_1 \times_{\cX} \cX_1$ with projections $p_1$ and $p_2$.  By Proposition \ref{eff_descent_prop} applied to one of the projections, there exists a good moduli space $\cX_2 \arr Y_2$.  The two projections $p_1, p_2$ induce two morphisms $q_1, q_2: Y_2 \arr Y_1$ such that $q_i \circ \phi_2 = \phi_1 \circ p_i$ for $i=1,2$.   By \cite[Theorem 4.15(xi)]{alper_good_arxiv}, both $Y_2$ and $Y_1$ are finite type over $S$ and by Proposition \ref{etale_preserving_cor}, $q_1$ and $q_2$ are \'etale.  The induced morphisms $\cX_2 \arr Y_2 \times_{q_i, Y_1, \phi_1} \cX_1$ are isomorphisms by Proposition \ref{square_prop}.  Similarly, by setting $\cX_3 = \cX_1 \times_{\cX} \cX_1 \times_{\cX} \cX_1$, there is a good moduli space $\phi_3: \cX_3 \arr Y_3$.  The \'etale projections $p_{12}, p_{13}, p_{23}: \cX_3 \arr \cX_2$ induce \'etale morphism $q_{12}, q_{13}, q_{23}: Y_3 \arr Y_2$.   In summary, there is a diagram
$$\xymatrix{
\cX_3  \ar@<1ex>[r] \ar@<-1ex>[r] \ar[r] \ar[d]	& \cX_2 \ar@<.5ex>[r]^{p_1} \ar@<-.5ex>[r]_{p_2} \ar[d]	& \cX_1 \ar[r]^f \ar[d]	& \cX \\
Y_3  \ar@<1ex>[r] \ar@<-1ex>[r] \ar[r]	& Y_2 \ar@<.5ex>[r]^{q_1} \ar@<-.5ex>[r]_{q_2}	& Y_1 ,
}$$
where all horizontal arrows are \'etale and the squares $\phi_2 \circ p_{ij} = q_{ij} \circ \phi_3 $ and $\phi_1 \circ p_i = q_i \circ \phi_2 $ are cartesian.

There is an identity map $e: \cX_1 \arr \cX_2$, an inverse map $i: \cX_2 \arr \cX_2$ and a multiplication $m: \cX_2 \times_{p_1, \cX_1, p_2} \cX_2 \cong \cX_3 \stackrel{p_{13}}{\arr} \cX_2$ inducing 2-diagrams:  $p_2 \circ e \iso \id \iso p_1 \circ e$, $i \circ i \iso \id$, $t \circ i = s$, $m \circ (i, \id) \iso e \circ p_1$, $m \circ (\id, i) \iso e \circ p_2$, $(e \circ p_1, \id) \circ m \iso \id \iso (e \circ p_2, \id) \circ m$ and $(m, \id) \circ m \iso (\id, m) \circ m$.  

By universality of good moduli spaces, there is an induced identity map $Y_1 \arr Y_2$, an inverse $Y_2 \arr Y_2$ and multiplication $Y_2 \times_{q_1, Y_1, q_2} Y_2 \arr Y_2$ inducing commutative diagrams (as above) giving $Y_2 \rrarrows Y_1$ an \'etale $S$-groupoid structure. 

We claim that $\Delta: Y_2 \arr Y_1 \times Y_1$ is a monomorphism.  Since it is clearly unramified, it suffices to check that $\Delta$ is geometrically injective.  We may assume $S = \Spec k$ with $k$ algebraically closed.  Let $y_1: \Spec k \arr Y_1$, $x_1: \Spec k \arr \cX_1$ be the unique point in $\phi_1^{-1}(y_1)$ closed in $|\cX_1|$, and $x: \Spec k \arr \cX$ be the image of $x_1$.  Since the square
$$\xymatrix{
BG_x \ar[r] \ar[d]				& BG_x \times_k BG_x \ar[d] \\
\cX_2 \ar[r] \ar[d]			& \cX_1 \times \cX_1 \ar[d] \\
\cX \ar[r]					& \cX \times_k \cX
}$$
is 2-cartesian, it follows that there can be only one preimage of $(y_1, y_1)$ under $\Delta$ and is geometrically injective. 

Therefore, there exist an algebraic space quotient $Y$ and induced maps $\phi: \cX \arr Y$ and $Y_1 \arr Y$.  Consider the diagram
$$\xymatrix{
\cX_2 \ar[r] \ar[d]		& \cX_1 \ar[d] \\
\cX_1 \ar[r] \ar[d]		& \cX \ar[d] \\
Y_1 \ar[r]				& Y.
}$$
Since $\cX_2 \cong \cX_1 \times_{Y_1} Y_2$ and $Y_2 \cong Y_1 \times_Y Y_1$, the top and outer squares above are 2-cartesian.  Since $\cX_1 \arr \cX$ is \'etale and surjective, it follows that the bottom square is cartesian.  By descent, $\phi: \cX \arr Y$ is a good moduli space.   \epf

\begin{remark}  \label{stab_hyp_remark}
The above hypotheses of Theorem \ref{etale_existence_thm} can not be weakened to only require that $f$ is stabilizer preserving at $\xi_1$.  Indeed, in Example \ref{example}, the natural \'etale presentation $f: X \arr \cX$ is stabilizer preserving at the origin and both projections $\ZZ_2 \times X \cong X \times_{\cX} X \rrarrows X$ are weakly saturated.  Clearly $X$ admits a good moduli space since it is a scheme but $\cX$ does not admit a good moduli space.
\end{remark}

As an application of Theorem \ref{etale_existence_thm}, we get the following:

\begin{cor} \label{red_cor} Suppose $\cX$ is an Artin stack locally of finite type over an excellent base scheme $S$.  Then $\cX$ admits a good moduli space if and only if $\cX_{\red}$ does.
\end{cor}

\begin{proof}  If $\phi: \cX \arr Y$ is a good moduli space, then \cite[Lemma 4.14]{alper_good_arxiv} implies that $\cX_{\red} \arr Y_{\red}$ is a good moduli space.

Conversely, suppose $\cX_{\red} \arr Y_1$ is a good moduli space with $Y_1$ an algebraic space.  The question is Zariski-local on $S$ and $Y_1$ since determining whether good moduli spaces of a Zariski-open cover glue depends only on the Zariski topology of $|\cX|$ (see \cite[Proposition 7.9]{alper_good_arxiv}).  Therefore, we may assume that $S$ is affine and $Y_1$ is quasi-compact.  If $Y_1$ is affine, then by \cite[Proposition 3.9 (iii)]{alper_good_arxiv} $\cX$ is cohomologically affine.   (The statement is also clear if $Y_1$ is a scheme.)

Let $U_1=\Spec A \arr Y_1$ be an \'etale presentation, $\cU_1 := \cX_{\red} \times_Y U_1 \arr U_1$ the induced good moduli space and $g_1: \cU_1 \arr \cX_{\red}$ be the projection.  There exists an Artin stack $\cU$ and a surjective \'etale morphism $g: \cU \arr \cX$ such that $g_{\red} = g_1$.  There exists a good moduli space $\cU \arr Y$ yielding a 2-commutative diagram
$$\xymatrix{
								& \cX_{\red} \ar@{^(->}[rr] \ar[dd]	&			& \cX  \\
\cU_1 \ar[ru]^{g_1} \ar@{^(->}[rr]	\ar[dd]	&						& \cU \ar[ur]^g \ar[dd] \\
								& Y_1 \\
U_1 \ar[ru] \ar@{^(->}[rr]						&							& Y.
}$$
Since $g_1$ is the pullback of a morphism of algebraic spaces, it is pointwise stabilizer preserving and universally weakly saturated.  Since both of these properties don't depend on the non-reduced structure, it follows that $g_1$ is pointwise stabilizer preserving and universally weakly saturated.  By applying Theorem \ref{etale_existence_thm}, we conclude that $\cX$ admits a good moduli space. 
\end{proof}

\bibliography{../../references}{}
\bibliographystyle{abbrv}



\end{document}